\definecolor{Blue}{rgb}{0,0,1}
\DeclareFontFamily{OT1}{eusb}{} \DeclareFontShape{OT1}{eusb}{m}{n} {<5> <6> <7> <8> <9> <10> <11> <12> <14.4> eusb10}{}
\DeclareMathAlphabet{\eusb}{OT1}{eusb}{m}{n}
\DeclareFontFamily{OT1}{eusm}{} \DeclareFontShape{OT1}{eusm}{m}{n} {<5> <6> <7> <8> <9> <10> <11> <12> <14.4> eusm10}{}
\DeclareMathAlphabet{\eusm}{OT1}{eusm}{m}{n}
\DeclareFontFamily{OT1}{eufm}{} \DeclareFontShape{OT1}{eufm}{m}{n} {<5> <6> <7> <8> <9> <10> <11> <12> <14.4> eufm10}{}
\DeclareMathAlphabet{\mathfrak}{OT1}{eufm}{m}{n}
\DeclareFontFamily{OT1}{fraktura}{}
\DeclareFontShape{OT1}{fraktura}{m}{n} {<5> <6> <7> <8> <9> <10> <11> <12> <13> <14.4> [1.1] eufm10}{}
\DeclareMathAlphabet{\fraktura}{OT1}{fraktura}{m}{n}
\DeclareFontFamily{OT1}{cmfi}{} \DeclareFontShape{OT1}{cmfi}{m}{n} {<5> <6> <7> <8> <9> <10> <11> <12> <13> <14.4> [0.9] cmfi10}{}
\DeclareMathAlphabet{\cmfi}{OT1}{cmfi}{b}{n}
\DeclareFontFamily{OT1}{cmss}{} \DeclareFontShape{OT1}{cmss}{m}{n} {<5> <6> <7> <8> <9> <10> <11> <12> <13> <14.4> cmss10}{}
\DeclareMathAlphabet{\cmss}{OT1}{cmss}{m}{n}
\newcommand{\cut}{{\mathcal C}}
\newtheoremstyle{thm}{1.5ex}{1.5ex}{\itshape\rmfamily}{} {\bfseries\rmfamily}{}{2ex}{}
\newtheoremstyle{def}{1.5ex}{1.5ex}{\slshape\rmfamily}{} {\bfseries\rmfamily}{}{2ex}{}
\newtheoremstyle{rem}{1.3ex}{1.3ex}{\rmfamily}{} {\itshape}
{} {1.5ex}{}
\theoremstyle{thm}
\newtheorem{theorem}{Theorem}[section]
\newtheorem{lemma}[theorem]{Lemma}
\newtheorem{claim}[theorem]{Claim}
\newtheorem{proposition}[theorem]{Proposition}
\newtheorem*{Main Theorem}{Main Theorem.}
\newtheorem{corollary}[theorem]{Corollary}
\newtheorem*{special theorem}{Lindeberg-Feller Theorem for Martingales}
\theoremstyle{def}
\newtheorem{definition}[theorem]{Definition}
\theoremstyle{rem}
\numberwithin{equation}{section}
\renewcommand{\section}{\secdef\sct\sect}
\newcommand{\sct}[2][default]{%
\refstepcounter{section}
\addcontentsline{toc}{section}{{\tocsection {}{\thesection}{\!\!\!\!#1\dotfill}}{}}
\vspace{0.7cm}
\centerline{\scshape\thesection.\ #1} \nopagebreak \vspace{0.2cm}}
\newcommand{\sect}[1]{%
\vspace{0.4cm} \centerline{\large\scshape\rmfamily #1}
\vspace{0.2cm}}
\newcommand{\tp}{{t^\prime}}
\newcommand{\qx}{{Q_{\XX}}}
\newcommand{\var}{{\bf var}}
\newcommand{\vo}{{\eta}}
\newcommand{\hfC}{{\mathcal D}}
\newcommand{\ignore}[1]{{}}
\renewcommand{\subsection}{\secdef\subsct\sbsect}
\newcommand{\subsct}[2][default]{\refstepcounter{subsection}
\addcontentsline{toc}{subsection}
{{\tocsection{\!\!}{\hspace{1.2em}\thesubsection}{\!\!\!\!#1\dotfill}}{}}
\nopagebreak\vspace{0.45\baselineskip} {\flushleft\bf
\thesubsection~\bf #1.~}
\\*[3mm]\noindent
\nopagebreak}
\newcommand{\sbsect}[1]{\vspace{0.1cm}\noindent
\textbf{#1.~}\vspace{0.1cm}}
\renewcommand{\subsubsection}{%
\secdef \subsubsect\sbsbsect}
\newcommand{\subsubsect}[2][default]{%
\refstepcounter{subsubsection}
\addcontentsline{toc}{subsubsection}{{\tocsection{\!\!}
{\hspace{3.05em}\thesubsubsection}{\!\!\!\!#1\dotfill}}{}}
\nopagebreak
\vspace{0.15\baselineskip} \nopagebreak {\flushleft\rmfamily
\itshape\thesubsubsection
\ \rmfamily #1\/.}\ }
\newcommand{\sbsbsect}[1]{\vspace{0.1cm}\noindent
\rmfamily \itshape
\arabic{section}.\arabic{subsection}.\arabic{subsubsection} \
\sffamily #1\/.\ }
\renewcommand{\caption}[1]{%
\vglue0.5cm
\refstepcounter{figure}
\begin{minipage}{0.9\textwidth}\small {\sc Figure~\thefigure. }#1\end{minipage}}
\newcommand{\omegan}{\omega^{(n)}}
\newcommand{\tq}{\widetilde{Q}}
\newcommand{\prob}{{\bf P}}
\newcommand{\PP}{\Gamma}
\newcommand{\XX}{\mathcal X}
\newcommand{\C}{\mathbf C}
\newcommand{\E}{{\bf E}}
\newcommand{\N}{\mathbb N}
\newcommand{\R}{\mathbb R}
\newcommand{\Z}{\mathbb Z}
\newcommand{\tP} {{\widetilde{P}}}
\newcommand{\tQ} {{\widetilde{Q}}}
\newcommand{\br}{{\rm br}}
\def\myffrac#1#2 in #3{\raise 2.6pt\hbox{$#3 #1$}\mkern-1.5mu\raise 0.8pt\hbox{$#3/$}\mkern-1.1mu\lower 1.5pt\hbox{$#3 #2$}}
\title[Detection of RW trails]
{\fontsize{14}{20}\selectfont Detecting the trail of a random walker in a random scenery}
\author[N.~Berger and Y.~Peres]{Noam Berger\,$^1$ \and Yuval Peres\,$^2$}
\begin{document}
%\thanks{\hglue-4.5mm\fontsize{9.6}{9.6}%\selectfont\copyright\,2007 by N.~Berger and Y.~Peres.
%Research of N.~B. was partially
%supported by ISF grant 708/08 and by ERC StG grant 239990. \vspace{2mm}}
%%Research of Y.~P. was partially supported by  NSF grant DMS-0605166 .
\maketitle

\vspace{-5mm}
\centerline{\textit{$^1$The Hebrew University of Jerusalem and the Technical University of Munich}}
\centerline{\textit{$^2$Microsoft Research, Redmond}}

\begin{abstract}
Suppose that the vertices of the lattice $\Z^d$ are endowed with a random scenery, obtained
 by tossing a fair coin at each vertex. A random walker, starting from the origin,
 replaces the coins along its path by i.i.d.\ biased coins.
For which walks and dimensions can the resulting scenery be distinguished
 from the original scenery?
We find the answer for simple random walk, where it does not depend on dimension, and for
walks with a nonzero mean, where a transition occurs between dimensions three and four.
We also answer this question for other types of graphs and walks, and raise several new questions.
\end{abstract}

\vspace{0.5cm}

\noindent
{\sl \tiny Keywords: Random walk, Random scenery, Relative entropy, Branching number.}\\
{\sl \tiny AMS subject classification: 60G50, 60K37}

\section{Introduction}
\label{sec:intro}\noindent

Let $\mu$ and $\nu$ be different probability measures on the same
finite sample space $\Omega$, such that $\mu(\omega)>0$ and $\nu(\omega)>0$
 for all $\omega \in \Omega$. Let $G$ be an infinite graph with a distinguished vertex
$v$, and denote by   $\PP(G,v)$ the space of infinite
paths $v,v_1,v_2,\ldots$ in $G$ that emanate from $v$. Endow $\PP(G,v)$
with the induced product topology, and let
$\Psi$ be a Borel probability measure on $\PP(G,v)$.
Suppose that initially, i.i.d. labels with law $\mu$ are attached to the vertices of $G$.
(Call the law of this random scenery $P$).
In the perturbation step, an infinite random path $\XX$ with distribution $\Psi$ is chosen,
and the labels along $\XX$ are replaced by independent labels with law $\nu$; this yields a new random scenery
with distribution $Q$. We address the
{\bf perturbation detection problem:}  \it
% Can the perturbation described above be detected just by observing the scenery?
Given a scenery in $\Omega^{V(G)}$, can one distinguish (without knowing $\XX$) whether
this scenery was sampled from $P$ or from $Q$? \rm

Clearly, the answer depends on the choice of $G,\Psi,\mu$ and $\nu$;
as we shall see, it is sometimes quite surprising.
To state this problem formally,
%$M$ be a transition kernel for a nearest neighbor random walk on $G$.
 let $P$ be the product measure $\mu^{V(G)}$, which is the initial distribution of the scenery.
 The distribution $Q$ of the perturbed scenery
is constructed as follows.
%Let $X_0=v,X_1,X_2,\ldots$ be a
%random walk on $G$ with the transition kernel $M$.
Denote by  $[\XX]$ the set of vertices in $\XX \in \PP(G,v)$ and
let $Q_\XX$ be the product measure $Q_\XX=\mu^{V(G)-[\XX]}\times\nu^{[\XX]}$
on  $\Omega^{V(G)}$ (i.e., the labels off $[\XX]$
are sampled from $\mu$ and the labels on $[\XX]$ are sampled from $\nu$.)
Finally, define the Borel measure $Q$ on $\Omega^{V(G)}$ by
\[
Q(A)=\int_{\PP(G,v)} Q_\XX(A) \, d\Psi(\XX) \,.
\]

We say that the  distributions $P$ and $Q$ are {\bf indistinguishable} if $P$
and $Q$ are absolutely continuous with respect to each other; otherwise, we say that
$P$ and $Q$ are {\bf distinguishable}. In general,  examples exist
of measures $P,Q$, constructed as above, that are distinguishable but not singular. However,
throughout most of this paper we choose to focus on graphs $G$ and
path distributions $\Psi$ where this intermediate situation does not occur.
Indeed, this can be established when
 $\Psi$ is the law of an automorphism-invariant Markov chain on a transitive
  graph.

% Proposition \ref{prop:01law} below tells us that $P$ and $Q$ are either
%singular, or mutually absolutely continuous, with
%respect to each other.

\begin{proposition}\label{prop:01law}
Let $G$ be a transitive graph and let $M$ be a
transition kernel on $V(G)$ which is invariant under a transitive subgroup
$H$ of automorphisms of $G$. (That is, $M(h(x),h(y))=M(x,y)$ for $x,y \in V(G)$
and $h \in H$.)
Let $\Psi$ be the law of the Markov chain with transition law $M$ and initial state $v$;
we assume that this chain is transient.
 Then the measures $P$ and $Q$ are either
singular, or mutually absolutely continuous.
% with respect to each other.
\end{proposition}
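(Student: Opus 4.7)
The plan is to build a likelihood-ratio martingale and reduce the dichotomy to two 0-1 statements. Let $B_n$ be the graph ball of radius $n$ around $v$ and $\FF_n=\sigma(\omega_x:x\in B_n)$. Since $|\Omega|<\infty$ and $\mu,\nu$ are strictly positive, $P|_{\FF_n}$ and $Q|_{\FF_n}$ are mutually absolutely continuous, and integrating the finite-dimensional densities against $\Psi$ gives
$$L_n(\omega) \;=\; \frac{dQ|_{\FF_n}}{dP|_{\FF_n}}(\omega) \;=\; \int_{\PP(G,v)} \prod_{x \in B_n \cap [\XX]} \frac{\nu(\omega_x)}{\mu(\omega_x)}\,d\Psi(\XX).$$
Then $(L_n)$ is a non-negative $P$-martingale with $\E_P[L_n]=1$, so $L_n\to L_\infty$ $P$-a.s., and $L_\infty=dQ_{ac}/dP$ in the Lebesgue decomposition $Q=L_\infty P+Q_s$. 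Since $L_\infty>0$ $P$-a.s.\ automatically yields $P\ll Q$, the proposition is equivalent to the disjunction: either $L_\infty=0$ $P$-a.s.\ (then $Q\perp P$), or $Q_s=0$ (then $Q\sim P$).

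For the first alternative, observe that the ratios $\nu(a)/\mu(a)$ lie in some interval $[c_1,c_2]\subset(0,\infty)$; hence modifying $\omega$ on a finite set $F$ multiplies every $L_n$ by a factor in $[(c_1/c_2)^{|F|},(c_2/c_1)^{|F|}]$ uniformly in $n$, and the same bound passes to $L_\infty$. Consequently both $\{L_\infty=0\}$ and $\{L_\infty=\infty\}$ lie in the tail $\sigma$-algebra $\TT=\bigcap_n\sigma(\omega_x:x\notin B_n)$, and Kolmogorov's 0-1 law under the product measure $P$ gives $P(L_\infty=0)\in\{0,1\}$. If $P(L_\infty=0)=1$ then $Q_{ac}=0$ and we are in the singular case.

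In the remaining case $L_\infty>0$ $P$-a.s., I would rule out $Q_s\ne 0$ by proving $Q(\{L_\infty=\infty\})=0$. (Considering the $Q$-martingale $M_n=1/L_n$ shows that $Q_s$ is concentrated on the tail event $\{L_\infty=\infty\}$.) Using $Q=\int Q_\XX\,d\Psi$ and Kolmogorov on each product measure $Q_\XX$, one gets $Q_\XX(\{L_\infty=\infty\})\in\{0,1\}$, hence
$$Q(\{L_\infty=\infty\}) \;=\; \Psi(\mathcal{B}),\qquad \mathcal{B}:=\{\XX:Q_\XX(\{L_\infty=\infty\})=1\}.$$
The set $\mathcal{B}$ depends on $\XX$ only through its range $[\XX]$, is invariant under finite modifications of that range (since then $Q_\XX\sim Q_{\XX'}$), and is $H_v$-invariant thanks to the $H$-invariance of $M$. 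A 0-1 law $\Psi(\mathcal{B})\in\{0,1\}$ for such events then finishes the proof: $\Psi(\mathcal{B})=1$ is excluded because $L_\infty>0$ on a $P$-full set and Fatou's lemma give $Q_{ac}(\Omega^{V(G)})=\E_P[L_\infty]>0$, contradicting $Q=Q_s$. So $\Psi(\mathcal{B})=0$, whence $Q_s=0$ and $Q\sim P$.

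The main obstacle is this last 0-1 law for $\Psi(\mathcal{B})$: the tail structure of the walk itself is not trivial in general, so one must leverage the transitive $H$-action together with transience of the chain (for instance, via the strong Markov property to shift the starting point into a suitable $H$-image, and decouple the long-range behaviour of the range $[\XX]$) in order to force the dichotomy $\Psi(\mathcal{B})\in\{0,1\}$.
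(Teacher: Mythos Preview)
Your martingale reduction is sound, and you have correctly isolated where the difficulty lies. The Kolmogorov argument for $P(L_\infty=0)\in\{0,1\}$ is fine, and so is the reduction of the remaining case to $\Psi(\mathcal B)\in\{0,1\}$ via Kolmogorov on each product measure $Q_\XX$. But this last 0-1 law is a genuine gap, not a technicality: a transient $H$-invariant chain can have a highly non-trivial tail $\sigma$-algebra (think of simple random walk on a free group, where the Poisson boundary is the space of ends), so no off-the-shelf 0-1 law for $\Psi$ applies. Invariance of $\mathcal B$ under the stabilizer $H_v$ and under finite modifications of the range does not force $\Psi(\mathcal B)\in\{0,1\}$; on a regular tree, for instance, $H_v$ does not act ergodically on the boundary, and range-tail events can have intermediate probability. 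Your closing sentence gestures at the right ingredients but does not supply the missing mechanism.

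The paper does not try to prove a 0-1 law on the path space alone. Instead it couples everything on one space $\Xi=(\Omega^{V(G)})^2\times\PP(G,v)$, carrying an independent $\mu$-scenery, an independent $\nu$-scenery, and the path $\XX$, so that both a $P$-sample and a $Q$-sample are deterministic functionals of a single point of $\Xi$. On $\Xi$ it defines a measure-preserving map $T$: shift the walk by one step and apply an automorphism in $H$ sending $\XX(1)$ back to $v$, acting accordingly on both sceneries. Transience of the chain makes $T$ mixing, hence ergodic. A short lemma (that moving the starting vertex by one edge gives a perturbed-scenery law $Q^*\sim Q$, since $\mu\sim\nu$) then shows that the lifted events $\{f>0\}$ and $\{g>0\}$ are $T$-invariant, so each has probability $0$ or $1$. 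Thus the trick is to put the path and \emph{both} sceneries into a single ergodic system; separating the $\omega$-randomness from the $\XX$-randomness, as you do, leaves you needing a 0-1 law on $\PP(G,v)$ that is simply not available in general.
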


In the subsections below we present the main results of this
paper, which determine whether  $P$ and
$Q$ are distinguishable for several families of graphs and random paths.

\subsection{The Euclidean lattice}
We contrast the behavior of simple random walk on $\Z^d$ with a walk of nonzero mean.
%simple oriented
%random walk where the increments give equal weight to the $d$ standard basis vectors.
%In the Euclidean case, it is important whether the random path $\chi$ has positive speed.
\begin{theorem}\label{thm:zdcase}
Assume that $G$ is the Euclidean lattice $\Z^d$.
\begin{enumerate}
\item\label{item:meanz}
Let $\Psi$ be the law of simple random walk on $\Z^d$.% with i.i.d.\ increments of mean zero.
Then for all dimensions $d$ and all
$\mu\neq\nu$, the distributions $P$ and $Q$ are singular.
\item\label{item:orient}
Let $\Psi$ be the law of a nearest-neighbor random walk on $\Z^d$,
 with i.i.d.\ increments of nonzero mean.
If $d\leq 3$, then for all $\mu\neq\nu$, the distributions $P$ and $Q$ are singular; however, if
$\, d\geq 4$, then there exist  $\mu\neq\nu$ such that the distributions
$P$ and $Q$ are indistinguishable.
\item\label{item:genpath3}
For $d\geq 3$, there exists a (not necessarily Markovian)
distribution $\Psi$ on $\PP(\Z^d,0)$ and measures $\mu\neq\nu$, such that $P$ and $Q$ are
indistinguishable.
\item\label{item:genpath2}
For any distribution
$\Psi$ on $\PP(\Z^2,0)$ and every $\mu\neq\nu$, the measures $P$ and $Q$
are singular.
\end{enumerate}
\end{theorem}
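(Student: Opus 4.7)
The plan is to exhibit a Borel event $A \subset \Omega^{\Z^2}$ with $P(A)=0$ and $Q(A)=1$. First, two standard reductions: by choosing $B \subset \Omega$ with $\mu(B) \neq \nu(B)$ and passing to the indicator $\1_{\{\eta(v) \in B\}}$, we may assume $\Omega = \{0,1\}$ with $\mu = \mathrm{Ber}(p)$ and $\nu = \mathrm{Ber}(q)$ for some $p < q$. Splitting $\Psi$ according to whether $|[\XX]|$ is finite or infinite, the finite part gives a component of $Q$ absolutely continuous with respect to $P$, so we may further assume $|[\XX]| = \infty$ holds $\Psi$-a.s.

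Set $r := (p+q)/2$. Let $A$ be the event that there exists an infinite connected set $S \ni 0$ in $\Z^2$ with $\liminf_{n \to \infty} |S \cap \Lambda_n \cap \{\eta = 1\}|/|S \cap \Lambda_n| \geq r$, where $\Lambda_n$ is the box of radius $n$ about the origin. Under $Q$, we may take $S := [\XX]$: it is infinite, connected, contains $0$, and its labels are i.i.d.\ $\mathrm{Ber}(q)$, so by the strong law of large numbers the empirical density along $S$ converges to $q > r$, giving $Q(A)=1$.

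To prove $P(A)=0$ I would combine two classical inputs. The number of connected subsets of $\Z^2$ of size $n$ containing $0$ is at most $\lambda^n$ for some $\lambda=\lambda(\Z^2)<\infty$ (lattice-animal bound), and for each such $S$ the Chernoff bound gives $\prob(|S \cap \{\eta=1\}|/|S| \geq r) \leq e^{-nI}$, where $I=I(r,p)>0$ is the large-deviation rate for $\mathrm{Ber}(p)$ exceeding level $r$. The expected number of ``biased'' size-$n$ connected sets is then at most $(\lambda e^{-I})^n$; when $I > \log\lambda$, Borel--Cantelli rules out any infinite biased set, giving $P(A)=0$.

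The main obstacle is the regime where $p,q$ are close, so $I \leq \log\lambda$ and the direct union bound fails. Here I would amplify by coarse-graining: group $\Z^2$ into blocks of side $L$ and work at block level, so the per-block Chernoff exponent becomes of order $L^2 I$, dominating any fixed lattice-animal rate once $L$ is large. The delicate point is keeping the test sensitive to thin path traces --- if $[\XX]$ is e.g.\ a straight line, it perturbs only $O(L)$ of the $L^2$ vertices per traversed block, producing density shift just $O(1/L)$, and a naive block-density threshold test fails. The resolution is a multi-scale construction that searches within each block for short ``perturbed crossings'' rather than full-block density deviations. It is precisely the polynomial growth of $\Z^2$, equivalently the codimension-$1$ embedding of a $1$-dimensional path into a $2$-dimensional ambient space, that makes such an amplification succeed in $d=2$, in contrast to the higher-dimensional statements (\ref{item:orient}) and (\ref{item:genpath3}).
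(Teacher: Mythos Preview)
Your proposal addresses only part~(\ref{item:genpath2}). The paper's argument for that part is quite different and much shorter: it never tries to locate the path. Fix $f:\Omega\to\R$ with $\E_\mu f=0$ and $\E_\nu f=1$, set $L_k=\{v:\|v\|_1=k\}$, and consider the linear statistic
\[
U_n=\sum_{k=1}^n \frac{1}{k}\sum_{v\in L_k} f(\omega(v)).
\]
Since $|L_k|=4k$ and any nearest-neighbour path from $0$ with infinite range meets every $L_k$, one has $\E_P U_n=0$ and $\var_P(U_n)=O(\log n)$, while $\E_{\tQ}(U_n\mid\XX)\ge\sum_{k\le n}k^{-1}\sim\log n$ with the same variance bound. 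Chebyshev then gives $P\bigl(U_n>\tfrac12\sum_{k\le n}k^{-1}\bigr)\to0$ and $Q\bigl(U_n>\tfrac12\sum_{k\le n}k^{-1}\bigr)\to1$, hence singularity --- uniformly over all $\mu\neq\nu$, with no case split.

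Your lattice-animal route is a first-moment search for the path, and you yourself identify that it is complete only when the per-site Chernoff exponent $I$ exceeds the animal growth rate $\log\lambda$. The genuine gap is the other regime, and the coarse-graining you sketch does not close it. Declaring an $L\times L$ block ``good'' when it contains a biased connected crossing, you would need the $P$-probability $p_L$ of a good block to drop below $1/\lambda$; but the only control you offer on $p_L$ is again a union bound over animals inside the block, governed by the \emph{same} ratio $I/\log\lambda<1$, so nothing is gained by changing scale. Your own observation that a thin path perturbs only $O(L)$ of the $L^2$ sites in a block is exactly why no block-density test works, and replacing it by an in-block crossing test merely reproduces the original problem at a smaller scale. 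The sentence ``the resolution is a multi-scale construction that searches within each block for short perturbed crossings'' names the difficulty without resolving it. (A minor additional point, even in the regime where your argument does go through: $S\cap\Lambda_n$ need not be connected, so the Borel--Cantelli conclusion on connected size-$n$ animals does not literally control the event $A$ as you defined it; this is repaired by using the trace of $\XX$ up to its $n$-th distinct vertex instead.)
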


\noindent{\bf Remark.} Part (\ref{item:orient}) of Theorem \ref{thm:zdcase}
is closely related to a result of Bolthausen and Sznitman \cite{BS}
on random walks in random environment.

\subsection{General graphs}
In this subsection we focus on
simple random walk, and prove the following fact.
(For definitions of speed of random walks and nonamenable graphs see, e.g., \cite{lyons-peres}.)
\begin{theorem}\label{thm:gengrp} %$\! \! \!\! \! \! \! \!$
\begin{enumerate}
\item\label{item:pos_speed}
Let $G$ be a Cayley graph such that simple random walk on $G$ has
positive speed. Then there exist $\mu\neq\nu$ such that $P$ and
$Q$ are indistinguishable.
\item\label{item:non_am}
Let $G$ be transitive and nonamenable.  Then there exist $\mu\neq\nu$ such
that $P$ and $Q$ are indistinguishable and the Radon-Nikodym
derivative $\frac{dQ}{dP}$ is in $L^2(P)$.
\end{enumerate}
\end{theorem}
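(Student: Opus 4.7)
My plan is to establish both parts via a second-moment computation for the Radon-Nikodym derivative restricted to finite regions, combined with Proposition \ref{prop:01law} to turn non-singularity into mutual absolute continuity. Fix $\mu \ne \nu$ with $f := d\nu/d\mu$, set $c := \mathrm{Var}_\mu(f) > 0$, and let $\FF_n$ denote the $\sigma$-algebra of labels on the ball $B_n := B(v,n)$. Writing $L_n := dQ|_{\FF_n}/dP|_{\FF_n}$ (a nonnegative $P$-martingale), the definition of $Q$ gives
\[
L_n(\omega) \;=\; \E_\Psi\!\left[\prod_{x \in [\XX] \cap B_n} f(\omega_x)\right],
\]
and Fubini together with $\E_\mu[f]=1$, $\E_\mu[f^2]=1+c$ yields the second-moment identity
\[
\E_P\!\left[L_n^2\right] \;=\; \E\!\left[(1+c)^{|[\XX]\cap[\XX']\cap B_n|}\right],
\]
where $\XX,\XX'$ are independent $\Psi$-walks. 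Whenever this is bounded uniformly in $n$, Vitali's theorem gives $L_n \to L := dQ/dP$ in $L^2(P)$, so $Q \ll P$ with $dQ/dP \in L^2(P)$; Proposition \ref{prop:01law} then upgrades this to $P \sim Q$.

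For part (\ref{item:non_am}), transitivity and nonamenability provide (via Kesten's theorem) a spectral radius $\rho < 1$, hence exponential decay of the Green function $G(v,x) \le C \rho^{d(v,x)}$. A standard reduction---expressing $I := |[\XX]\cap[\XX']|$ via $G$ and using exponential tails of excursions between intersection points---shows that $I$ has finite exponential moments: $\E[e^{\beta_0 I}] < \infty$ for some $\beta_0 > 0$. Choosing $\nu$ close enough to $\mu$ so that $\log(1+c) < \beta_0$ makes $\sup_n \E_P[L_n^2]$ finite, which delivers the stronger conclusion $dQ/dP \in L^2(P)$ required in this part.

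For part (\ref{item:pos_speed}) the skeleton is the same, but nonamenability is no longer available (e.g.\ the amenable lamplighter $\Z_2 \wr \Z^d$, $d \ge 3$, carries SRW of positive speed). Instead I would use the positive speed $v > 0$: a large-deviation bound on the transitive Cayley graph gives $p_n(v,v) \le e^{-c_1 n}$, and combining this with the Varopoulos--Carne estimate yields exponential decay of the Green function, $G(v,x) \le C e^{-c_2 d(v,x)}$. To convert this into an exponential moment bound on $I$, I would decompose $\XX'$ along its cut times---produced using positive asymptotic entropy (Kaimanovich's criterion, equivalent to positive speed on a Cayley graph)---into i.i.d.\ blocks with geometrically distributed gap lengths; the contribution of each block to $I$ can be dominated by an independent geometric random variable, giving $\prob(I \ge k) \le C e^{-c_3 k}$. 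Taking $c$ small enough so that $\log(1+c) < c_3$ again finitizes $\sup_n \E_P[L_n^2]$ and concludes by Proposition \ref{prop:01law}.

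The principal obstacle is part (\ref{item:pos_speed}): transporting positive speed (a first-order statement about $|X_n|$) into a genuinely exponential statement about range intersections, without the spectral leverage afforded by nonamenability. The delicate step is establishing a regeneration structure with a geometric gap law on an arbitrary positive-speed Cayley graph---this is where Cayley-graph structure (transitivity plus the group action) and Kaimanovich's entropy theory must enter. If this regenerative/second-moment route fails for some amenable positive-speed Cayley graph, one would fall back on a weaker argument---e.g.\ a Hellinger integral bound $\inf_n \int \sqrt{L_n}\, dP > 0$---which still rules out singularity and hence, via Proposition \ref{prop:01law}, gives indistinguishability (but not $L^2$).
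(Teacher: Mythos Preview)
Your treatment of part~(\ref{item:non_am}) is correct and essentially coincides with the paper's: express $\E_P[L_n^2]$ as the exponential moment of the intersection size $I=|[\XX]\cap[\XX']|$, and use nonamenability (Kesten) to get exponential decay of transition/Green kernels, hence $\E[e^{\beta_0 I}]<\infty$ for some $\beta_0>0$. The paper's bound on $\prob[\XX^{(1)}(n)\in[\XX^{(2)}]]$ via a split at $B(v,\epsilon n)$ is a minor variant of what you sketch.

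Part~(\ref{item:pos_speed}), however, has a real gap. Your assertion that positive speed yields $p_n(v,v)\le e^{-c_1 n}$ is false: on any amenable Cayley graph---in particular on the lamplighter $\Z_2\wr\Z^3$ you yourself mention---Kesten's theorem gives spectral radius $1$, so return probabilities decay only subexponentially. With that gone, your route to a uniform bound $G(v,x)\le Ce^{-c_2 d(v,x)}$ via Varopoulos--Carne collapses, and the regeneration/cut-time sketch built on top of it has no foundation. Note that the paper does \emph{not} claim $dQ/dP\in L^2(P)$ in part~(\ref{item:pos_speed}); it is not clear that the annealed moment $\E[e^{CI}]$ is finite on every positive-speed Cayley graph.

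The idea you are missing is to replace the annealed criterion by a \emph{quenched} one. The paper shows (Lemma~\ref{lem:quenchedshnitzel}) that it suffices to have
\[
\prob\!\left(\E\!\left[e^{C I}\,\middle|\,\XX^{(1)}\right]<\infty\right)>0,
\]
proved by running your second-moment computation on the event that $\XX^{(1)}$ is ``good'' and then invoking Proposition~\ref{prop:01law}. This is fed by a \emph{pathwise} Green-function estimate: for positive-speed SRW on a Cayley graph, almost surely there is a (random) $\gamma>0$ with $G\bigl(v,\XX^{(1)}(n)\bigr)<e^{-\gamma n}$ for all large $n$ (Proposition~6.2 of~\cite{itai}). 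Since $\prob(x\in[\XX^{(2)}])\le G(v,x)$, one gets $\prob(I>n\mid\XX^{(1)})=O(e^{-\gamma n})$ a.s., which is precisely the quenched hypothesis. The whole point is that $\gamma$ depends on the realization of $\XX^{(1)}$, so no uniform (annealed) exponential bound is asserted or required.
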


\subsection{Self-avoiding walks on trees --- a relative entropy criterion}

The next case that we discuss is self-avoiding walks on trees. In
this case we find a distinguishability criterion
in terms of the relative entropy between $\mu$ and $\nu$. Since we
discuss general trees (which are typically not transitive),
Proposition \ref{prop:01law} no longer applies, so $P$ and $Q$
 might be neither absolutely continuous nor singular with respect
to each other. Before we state the theorems, we need a few
definitions.

\begin{definition}\label{def:relent}
For measures $\mu$ and $\nu$ on $\Omega$, the {\em entropy} of
$\nu$ {\em relative} to $\mu$ is defined as
\[
H(\nu|\mu)=\sum_{\rho\in\Omega}\nu(\rho)\log\left(\frac{\nu(\rho)}{\mu(\rho)}\right).
\]
\end{definition}
(Recall that we always assume that $\mu(\omega)>0$ and $\nu(\omega)>0$
 for all $\omega \in \Omega$.)
Consider an infinite tree without leaves (except possibly at the
root). A {\em ray} is an infinite self-avoiding path starting at
the root. The {\em boundary} $\partial T$ of a tree $T$ is the set of all rays;
thus $\partial T \subset \PP(T,\mbox{root})$.
The induced Borel $\sigma$-algebra on $\partial T$ is generated by the sets
$\{\Upsilon(v):v\in T\}$, where $\Upsilon(v) \subset \partial T$ denotes the set of all rays going
through the vertex $v$. For a Borel measure $\Psi$ on  $\partial T$, we
abbreviate $\Psi(v)$ for $\Psi(\Upsilon(v))$.

\medskip

\begin{definition}\label{def:bn} \! \! \! \! \! \! \mbox{ (\rm Lyons \cite{russell})} \! \!
The {\bf branching number} $\br(T)$ of a tree $T$ is defined as the
supremum of all values $\beta$ such that there exists a
probability measure $\Psi_\beta$ on $\partial T$
satisfying
\[
\sup_{v\in V(T)}\beta^{|v|}\Psi_\beta(v)<\infty,
\]
where $|v|$ denotes the distance between $v$ and the root.
% The {\em log branching number} is defined as $\hat{b}(T):=\log b(T)$.

\end{definition}

\begin{definition}\label{def:locdim}
Let $\Psi$ be a probability measure on $\partial T$ and let
$\chi=(v,v_1,v_2,\ldots)$ be a ray in $\partial T$. The {\em local
dimension} of $\Psi$ on $\chi$ is
\[
d_{\Psi}(\chi)=\liminf_{n\to\infty}\frac{-\log(\Psi(v_n))}{n}.
\]
\end{definition}

Let $T$ be a leafless tree and let $\mu\neq\nu$ be probability measures
supported on a finite space $\Omega$.
As before, for a distribution $\Psi$ on $\partial T$,
the probability measures $P$ and $Q$ on
$\Omega^{V(T)}$ are defined by:
\[
P=\mu^{V(T)}\ \ \ ; \ \ \ Q=Q_\Psi=\int_{\partial
T}\nu^{[\XX]}\times\mu^{V(T)-[\XX]}d\Psi(\XX).
\]
\begin{theorem}\label{thm:branum} With notation as above,
\begin{enumerate}
\item\label{item:sub}
If $\log \br(T)<H(\nu|\mu)$, then $P$ and $Q$ are singular for every
measure $\Psi$ on $\partial T$.
\item\label{item:super}
If $\log \br(T)>H(\nu|\mu)$, then there exists a measure $\Psi$ on
$\partial T$ such that $P$ and $Q$ are indistinguishable.
\end{enumerate}
\end{theorem}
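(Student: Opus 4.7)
For part (\ref{item:sub}), fix $\beta$ with $\br(T)<\beta<e^{H(\nu|\mu)}$. By Lyons' cutset characterization of the branching number (equivalent to Definition~\ref{def:bn}), there exist finite cutsets $\Pi_k\subset V(T)$ with $\min_{v\in\Pi_k}|v|\to\infty$ and $\sum_{v\in\Pi_k}\beta^{-|v|}\to 0$. Let $A$ be the event that some ray $\chi\in\partial T$ has empirical label distribution inside an open ball $B_\epsilon(\nu)$ for all sufficiently large prefixes. Under $Q$, the marked ray $\XX$ sees i.i.d.\ $\nu$-labels, so $\XX\in A$ with $Q$-probability one by the SLLN. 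Under $P$, choose $\epsilon$ small enough that $H(\cdot|\mu)>\log\beta$ throughout $B_\epsilon(\nu)$; Sanov's theorem then gives $P(\text{empirical distribution along }\mathrm{path}(v)\in B_\epsilon(\nu))\leq C\beta^{-|v|}$, and a union bound over $\Pi_k$ yields $P(A)\leq\sum_{v\in\Pi_k}C\beta^{-|v|}\to 0$. Hence $P\perp Q$.

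For part (\ref{item:super}), fix $\beta$ with $e^{H(\nu|\mu)}<\beta<\br(T)$ and let $\Psi=\Psi_\beta$ be a measure from Definition~\ref{def:bn}, so $\Psi(v)\leq C\beta^{-|v|}$. Then $P\ll Q$ is automatic: since $\mu,\nu$ are everywhere positive, each $Q_\XX=\mu^{V(T)\setminus[\XX]}\times\nu^{[\XX]}$ is equivalent to $P=\mu^{V(T)}$, so $P(A)=0$ forces $Q_\XX(A)=0$ for every $\XX$, whence $Q(A)=0$. For $Q\ll P$, I would analyze the nonnegative $P$-martingale
\[
M_n(\omega)=\sum_{|v|=n}\Psi(v)\,L^v(\omega),\qquad L^v=\prod_{w\in\mathrm{path}(v)}\frac{\nu(\omega_w)}{\mu(\omega_w)},
\]
noting that uniform integrability of $M_n$ is equivalent to $E_P[M_\infty]=1$, which gives $Q\ll P$. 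The direct second-moment computation $E_P[M_n^2]=\sum_{v,v'}\Psi(v)\Psi(v')\,c^{|v\wedge v'|+1}$ with $c=\sum\nu^2/\mu$ converges only when $\beta>c$; but Jensen gives $c\geq e^{H(\nu|\mu)}$, strictly stronger than the hypothesis. Closing this gap is the main obstacle.

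The remedy is a truncation at the $\nu$-typical growth rate of $L^v$. Fix $K$ large and $\epsilon\in(0,\log\beta-H(\nu|\mu))$, and let $T^\ast$ be the random subtree of vertices $v$ whose every prefix $v_k$ satisfies $L^{v_k}\leq Ke^{k(H(\nu|\mu)+\epsilon)}$; set $\tilde M_n=\sum_{|v|=n,\,v\in T^\ast}\Psi(v)L^v$. Because membership of $v,v'$ in $T^\ast$ forces $L^u\leq Ke^{|u|(H(\nu|\mu)+\epsilon)}$ at the common ancestor $u=v\wedge v'$, the decomposition $L^vL^{v'}=(L^u)^2\,L^{v|u}\,L^{v'|u}$ with independent mean-one factors on disjoint vertex sets yields
\[
E_P[\tilde M_n^2]\leq K\sum_{v,v'}\Psi(v)\Psi(v')\,e^{|v\wedge v'|(H(\nu|\mu)+\epsilon)}\leq C'\sum_{k\geq 0}\bigl(e^{H(\nu|\mu)+\epsilon}/\beta\bigr)^k<\infty,
\]
uniformly in $n$ (using $\sum_{|u|=k}\Psi(u)^2\leq C\beta^{-k}$). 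Meanwhile, under the $\nu$-tilt $\log L^{v_k}/k\to H(\nu|\mu)$, and Cram\'er's theorem makes the event $L^{v_k}>Ke^{k(H(\nu|\mu)+\epsilon)}$ exponentially rare in $k$, so $E_P[M_n-\tilde M_n]=\sum_v\Psi(v)\,P_\nu(v\notin T^\ast)\leq\delta(K)$ uniformly in $n$, with $\delta(K)\to 0$ as $K\to\infty$. Splitting $\{M_n>A\}$ through $\{\tilde M_n>A/2\}\cup\{M_n-\tilde M_n>A/2\}$ and applying Markov together with the $L^2$-bound on $\tilde M_n$ then shows $M_n$ is uniformly integrable, yielding $Q\ll P$.
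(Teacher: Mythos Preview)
Your argument for part~(\ref{item:sub}) and your truncated-second-moment argument for $Q\ll P$ in part~(\ref{item:super}) are both correct and close in spirit to the paper's proofs (the paper also reduces part~(\ref{item:sub}) to a cutset estimate, and also handles $Q\ll P$ by restricting to paths along which the likelihood ratio grows at the $\nu$-typical rate $e^{nH}$; see Lemmas~\ref{lem:sub} and~\ref{lem:super}).

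However, your claim that ``$P\ll Q$ is automatic'' is false, and this is a genuine gap. You assert that each $Q_\XX$ is equivalent to $P$ because $\mu$ and $\nu$ are everywhere positive, but $[\XX]$ is infinite and $\mu\neq\nu$, so by Kakutani's dichotomy the product measures $Q_\XX=\mu^{V(T)\setminus[\XX]}\times\nu^{[\XX]}$ and $P=\mu^{V(T)}$ are mutually \emph{singular}, not equivalent. The implication ``$P(A)=0\Rightarrow Q_\XX(A)=0$ for every $\XX$'' therefore fails. The correct route, which the paper takes, is to deduce $P\ll Q$ from $Q\ll P$ via Kolmogorov's zero--one law: once $Q\ll P$, the density $g=\frac{dQ}{dP}$ is not $P$-a.s.\ zero; but changing finitely many coordinates of $\omega$ multiplies each integrand $\prod_{u\in[\XX]}r(\omega(u))$ by a factor bounded between $\min_\rho r(\rho)/\max_\rho r(\rho)$ and its reciprocal, so $\{g>0\}$ is a tail event for the i.i.d.\ measure $P$, hence has $P$-probability one, and $P\ll Q$ follows.
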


\begin{theorem}\label{thm:sinpath}
Let $T$ be a leafless tree and let $\Psi$ be a measure on $\partial T$.
Let $\Upsilon_+$ be the set of rays $\chi \in \partial T$ such that
 $d_{\Psi}(\chi)>H(\nu|\mu)$; similarly, let  $\Upsilon_-$ be the set of
 $\chi \in \partial T$ such that
 $d_{\Psi}(\chi)<H(\nu|\mu)$. Denote $\Psi$ conditioned on $\Upsilon_+$ by $\Psi_+$, and define
 $\Psi_-$ analogously. We write $Q_+$ for $Q_{\Psi_+}$ and $Q_-$ for $Q_{\Psi_-}$.
\begin{enumerate}
\item\label{item:sinsub}
if $\Psi(\Upsilon_+)>0$, then $Q_+$ and $P$ are indistinguishable.
\item\label{item:sinsuper}
If $\Psi(\Upsilon_-)>0$, then $Q_-$ and $P$ are singular.
\end{enumerate}
\end{theorem}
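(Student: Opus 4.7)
Let $f = \nu/\mu$, $H := H(\nu|\mu)$, and for a ray $\XX = (v_0, v_1, \dots) \in \partial T$ set $M_n(\XX, \omega) := \prod_{k=0}^n f(\omega(v_k))$. Since $E_\mu[f] = 1$, for each fixed $\XX$ the process $n \mapsto M_n(\XX,\cdot)$ is a positive $P$-martingale with mean $1$, and so is the mixed likelihood ratio $L_n(\omega) := \int M_n(\XX,\omega)\, d\Psi(\XX)$, which represents $dQ/dP$ on labels at levels $\leq n$. The proof rests on the contrast that, under $Q_\XX$, the strong law gives $\tfrac{1}{n}\log M_n(\XX) \to H$, whereas under $P$ the large-deviation rate function $I$ of $\tfrac1n \log M_n$ (the Legendre transform of $\Lambda(s) := \log E_\mu[f^s]$) satisfies $I(H) = H$, because $\Lambda(1) = 0$ and $\Lambda'(1) = H$.

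For part (\ref{item:sinsub}) I would first write $\Upsilon_+ = \bigcup_{k,N \geq 1} \Upsilon_{k,N}$ with
\[
\Upsilon_{k,N} := \{\chi \in \partial T: d_\Psi(\chi) > H + 1/k,\ \Psi(v_j^\chi) \leq e^{-j(H + 1/(2k))}\ \forall j \geq N\};
\]
the liminf definition of local dimension makes this a cover, and a countable convex decomposition of $Q_+$ reduces the claim $Q_+ \sim P$ to the same statement for $\Psi$ supported on a single piece $\Upsilon_{k,N}$. On such a piece, pick $\epsilon \in (H, H+1/(2k))$, set $\tau_\XX := \inf\{j \geq N : M_j(\XX) > C e^{j\epsilon}\}$ and truncate $\widetilde{L}_n := \int M_n \mathbf{1}_{\tau_\XX > n}\, d\Psi$. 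Splitting two independent rays by their last common ancestor at depth $m$, one bounds the shared partial product by $C e^{m\epsilon}$ on the truncation event, while the remaining disjoint tails are independent under $P$; this yields
\[
E_P[\widetilde{L}_n^2] \leq C \sum_{m \geq 0} e^{m\epsilon} \max_{|v|=m} \Psi(v) < \infty
\]
uniformly in $n$, since $\Psi(v) \leq e^{-|v|(H + 1/(2k))}$ on ancestors of $\Upsilon_{k,N}$-rays with $|v| \geq N$. Separately, optional stopping on the $P$-martingale $M_n$ combined with the Chernoff estimate $E_P[M_k \mathbf{1}_{M_k > Ce^{k\epsilon}}] \leq C^{1-s} e^{-k[(s-1)\epsilon - \Lambda(s)]}$ at a tilt $s > 1$ with $(s-1)\epsilon > \Lambda(s)$ (possible precisely because $\epsilon > H = \Lambda'(1)$) gives $E_P[L_n - \widetilde{L}_n] = O(C^{1-s})$ uniformly in $n$. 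Consequently $L_n$ is uniformly integrable, $L_n \to L$ in $L^1(P)$, $E_P[L] = 1$, and $Q_+ \ll P$. For the reverse inclusion, observe that $L = f(\omega(v))\, A^{(v)} + B^{(v)}$ with $A^{(v)}, B^{(v)} \geq 0$ independent of $\omega(v)$, so $\{L > 0\}$ is invariant under changing any single label and hence lies in the tail $\sigma$-algebra of the product measure $P$; Kolmogorov's $0$--$1$ law together with $E_P[L] = 1$ forces $P(L > 0) = 1$, giving $P \ll Q_+$.

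For part (\ref{item:sinsuper}) a parallel reduction places $\Psi$ on $\{\chi : d_\Psi(\chi) < H - 2\eta\}$ for some $\eta > 0$. Pick $\alpha \in (H - 2\eta, H - \eta)$ and then $\beta \in (\alpha, H)$ close enough to $H$ that $I(\beta) > \alpha$ (possible by continuity since $I(H) = H > \alpha$). Define
\[
A := \{\omega : \exists \chi \in \partial T \text{ and } n_j \to \infty \text{ with } \Psi(v_{n_j}^\chi) \geq e^{-n_j\alpha} \text{ and } M_{n_j}(\chi, \omega) \geq e^{n_j \beta}\}.
\]
Under $Q_-$ the sampled ray $\XX$ has $d_\Psi(\XX) < \alpha$, so $\Psi(v_n^\XX) \geq e^{-n\alpha}$ infinitely often, while the strong law under $Q_\XX$ gives $M_n(\XX) \geq e^{n\beta}$ eventually; hence $Q_-(A) = 1$. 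Under $P$, at most $e^{n\alpha}$ vertices at level $n$ satisfy $\Psi(v) \geq e^{-n\alpha}$, and for each such vertex Chernoff gives $P(M_n \geq e^{n\beta}) \leq e^{-n I(\beta)}$; a union bound bounds the expected number of offending vertices at level $n$ by $e^{-n(I(\beta) - \alpha)}$, which is summable, so Borel--Cantelli gives $P(A) = 0$. The main obstacle throughout is the $L^2$-truncation in part (\ref{item:sinsub}): a naive second moment diverges because $\log E_\mu[f^2]$ typically exceeds $H(\nu|\mu)$, so local dimension above $H$ is not enough on its own, and one must cut off rays with abnormally large partial products while controlling the discarded mass via optional stopping and the tilt-$s$ Chernoff bound at $\epsilon$ just above $\Lambda'(1) = H$.
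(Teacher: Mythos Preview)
Your proof is correct, and for Part~\ref{item:sinsuper} it is essentially the paper's argument: both construct a distinguishing event by combining the counting bound $\#\{v:|v|=n,\ \Psi(v)\ge e^{-n\alpha}\}\le e^{n\alpha}$ with a large-deviation estimate on $M_n$ under $P$, then invoke Borel--Cantelli. The paper is marginally simpler in that it thresholds at exactly $H$ and uses the bare Markov inequality $P(M_n\ge e^{nH})\le e^{-nH}$ (this is where your observation $I(H)=H$ is implicitly used), whereas you tilt to $\beta<H$ and appeal to continuity of $I$; both choices work. One cosmetic point: your event $A$ quantifies over uncountably many rays $\chi$, which makes measurability not entirely obvious; your Borel--Cantelli argument actually establishes the stronger statement that $P$-a.s.\ only finitely many vertices $v$ in the whole tree satisfy both $\Psi(v)\ge e^{-|v|\alpha}$ and $M_{|v|}(v)\ge e^{|v|\beta}$, so you may as well take that (manifestly measurable) event as your distinguisher.

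For Part~\ref{item:sinsub} your route genuinely differs from the paper's. The paper (Lemma~\ref{lem:super}) works under the joint measure $\widetilde Q$ on $(\omega,\XX)$: it restricts to the high-$\widetilde Q$-probability event that the empirical frequencies of $\omega$ along $\XX$ are $\delta$-close to $\nu$, which directly forces $\prod_{u\in[\XX]\cap[\XX']} r(\omega(u))\le e^{m(H+\gamma/2)}$ for $m=|[\XX]\cap[\XX']|$, and then bounds $\E_{\widetilde Q}[g_n\mathbf 1_A]$ by the geometric series $\sum_m e^{m(H+\gamma/2)}e^{-m(H+\gamma)}$. You instead work entirely under $P$: truncate each ray's martingale at a stopping time, bound the second moment of the truncated mixture via the common-ancestor decomposition, and control the discarded $L^1$-mass with optional stopping and a tilt-$s$ Chernoff bound exploiting $\Lambda(1)=0$, $\Lambda'(1)=H<\epsilon$. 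The paper's approach is shorter and needs no truncation or tilting, since the law-of-large-numbers event under $\nu$ does the work of your stopping time; your approach is more quantitative (yielding an explicit $L^2$ bound on the truncated density) and stays within classical $P$-martingale machinery. Both proofs close with the same Kolmogorov $0$--$1$ argument for $P\ll Q_+$.
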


\subsection{Structure of the paper}
In Section \ref{sec:euc} we prove Theorem \ref{thm:zdcase}. In
Section \ref{sec:gengrp} we prove Proposition \ref{prop:01law}
 and Theorem \ref{thm:gengrp}, and in
Section \ref{sec:tree} we prove Theorem \ref{thm:branum} and
Theorem \ref{thm:sinpath}.

\noindent{\bf Remark.} After the results of this paper were obtained, we learned that
Arias-Castro, Candes, Helgason and Zeitouni~\cite{Ofer} considered some related questions.
However, the emphasis in their paper is different, and the overlap
between the two papers is minimal.

\section{Distinguishability in the Euclidean lattice}\label{sec:euc}
\subsection{Mean zero Random Walks}
In this subsection we prove part \ref{item:meanz} of Theorem
\ref{thm:zdcase}. To establish singularity between $P$ and $Q$,
we use the fact that typically, there exist cubes of volume significantly greater than
$\log t$, such that simple
random walk visits a substantial portion of the cube in the first $t$ steps.
\begin{lemma}\label{lem:largediv}
For $d\geq 3$, let $\{\XX_j\}$ be a  mean zero random walk on
$\Z^d$, let $T_k=\min\{j:\|\XX_j\|_\infty=k\}$ and let $[\XX^{(k)}]$
be the set of points covered by $\{\XX_j:j=1,\ldots,T_k\}$. There
exist $\delta >0$ and $C <\infty$, such that
\[
\prob\left( \left| [-n,n)^d\cap\left[\XX^{(2n)}\right] \right| \geq
\delta (2n)^d \right) \ge \delta  e^{-Cn^{d-2}}.
\]
for every sufficiently large $n$.
\end{lemma}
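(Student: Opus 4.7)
The strategy is to exploit two competing estimates. First, the probability that the walk remains confined to the box $B := [-2n,2n)^d$ for a long time $T = Kn^d$ decays only as $e^{-\Theta(n^{d-2})}$. Second, conditional on such confinement, the walk behaves ergodically inside $B$ and must visit a positive fraction of the central sub-box $[-n,n)^d$.

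For the confinement probability, let $P_B$ be the sub-Markov transition operator of the walk killed upon exit from $B$. Standard estimates, obtained by comparing $P_B$ to the continuum Dirichlet Laplacian on $[-2,2]^d$ via the invariance principle, show that the principal eigenvalue $\lambda_1$ of $I-P_B$ satisfies $\lambda_1 \asymp n^{-2}$, and that the associated positive eigenfunction $\phi$ (normalized so that $\|\phi\|_\infty \asymp 1$) is bounded below by a positive constant on $[-n,n)^d$. The spectral expansion then gives
\[
\prob_0(\tau_B > T) \;=\; P_B^T \mathbf{1}(0) \;\geq\; c\,e^{-\lambda_1 T} \;\geq\; c'\, e^{-CKn^{d-2}},
\]
where $\tau_B$ is the exit time from $B$.

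Next, perform a Doob $h$-transform with $\phi$: the resulting chain $\hat X$ is reversible on $B$ with stationary measure $\pi \propto \phi^2$, spectral gap $\asymp n^{-2}$, and mixing time $O(n^2 \log n) \ll T$. Because $\phi \geq c$ on $[-n,n)^d$, we have $\pi([-n,n)^d) \geq c_0$, so by ergodicity $\hat X$ spends at least $c_1 T$ steps in $[-n,n)^d$ with probability $\geq 1/2$. The transition probabilities of $\hat X$ differ from those of the original walk by only a bounded multiplicative factor at interior sites, and the assumption $d \geq 3$ makes the Green's function of the unconditioned walk bounded; hence the expected local time of $\hat X$ at any single site of $[-n,n)^d$ is $O(1)$ up to mixing and at most $O(K)$ overall. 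A second-moment (Cauchy--Schwarz) argument applied to $\sum_y \mathbf{1}[L_T(y) \geq 1]$ then shows that the number of \emph{distinct} sites visited by $\hat X$ in $[-n,n)^d$ during the first $T$ steps is $\Theta(n^d)$ with probability bounded below, which exceeds $\delta(2n)^d$ for $\delta$ small enough.

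Finally, the law of the first $T/2$ steps of the original walk conditioned on $\{\tau_B > T\}$ has Radon--Nikodym density with respect to the law of $\hat X$ which is uniformly bounded above and below (a standard consequence of the spectral gap and the bounds on $\phi$), so the range estimate transfers to the conditioned walk. On the event $\{\tau_B > T\}$ we have $T \leq T_{2n}$, so $[\XX^{(2n)}] \supseteq \{\XX_1,\ldots,\XX_T\}$, and combining with the confinement bound yields the lemma.

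\textbf{Main obstacle.} The most delicate inputs are the quantitative spectral estimates---$\lambda_1 \asymp n^{-2}$ together with the lower bound $\phi \geq c > 0$ on $[-n,n)^d$ uniformly in $n$---which require an invariance-principle-type approximation and implicit moment assumptions on the step distribution. The second-moment range estimate under $\hat X$ and the comparison between the conditioned and $h$-transformed laws are then relatively routine, using that $d \geq 3$ keeps the Green's function bounded at short scales.
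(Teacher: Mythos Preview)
Your approach is viable and can be made rigorous, but it is quite different from the paper's argument and considerably heavier in machinery. The paper does not touch spectral theory at all: instead it constructs, by hand, an event of probability $\ge e^{-Cn^{d-2}}$ on which the range is large. Concretely, it defines successive excursion times between the spheres $\{\|x\|_\infty=n\}$ and $\{\|x\|_\infty=n/2\}$ before exiting at radius $2n$; by the invariance principle each such return happens with probability $\ge e^{-C}$, so completing $n^{d-2}$ excursions costs at most $e^{-Cn^{d-2}}$. On that event, a direct Green-function estimate (each excursion hits any fixed point of the shell $\{2n/3<\|x\|_\infty\le 5n/6\}$ with probability $\gtrsim n^{2-d}$) gives $\prob(x\in[\XX^{(2n)}]\mid A)\ge\rho>0$ for every $x$ in the shell, and a first-moment plus Markov bound finishes. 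Your route replaces this excursion bookkeeping by the principal Dirichlet eigenpair and a Doob transform; the gain is a more structural picture (and perhaps easier generalisation to other reversible walks or domains), but the cost is substantial technical input---uniform lower bounds on the ground-state eigenfunction in the inner box, the spectral-gap comparison between the conditioned law and the $h$-process, and the range estimate under $\hat X$. None of these is false, but each is a lemma in its own right (and the eigenfunction lower bound, as you note, implicitly uses invariance-principle-level control of the walk). The paper's argument reaches the same conclusion with nothing beyond Donsker's theorem and the transient Green-function bound $G(0,x)\asymp |x|^{2-d}$.
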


\begin{proof}
Throughout this proof, all norms are $\ell^\infty$ norms. Fix $n$ and define
the stopping time $S_1=\min\{j:\|\XX_j\|=n\}$. Let
$R_1=\min\{j\in(S_1,T_{2n}):\|\XX_j\|=n/2\}$, with the convention that
$R_1$ is $\infty$ if the set is empty. For $k=2,3,\ldots$
satisfying $R_{k-1}<\infty$, we define
$S_k=\min\{j>R_{k-1}:\|\XX_j\|=n\}$ and
$R_k=\min\{j\in(S_k,T_{2n}):\|\XX_j\|=n/2\}$ where, again,
$R_k=\infty$ if the set is empty. By Donsker's invariance
principle, there exists $C<\infty$ (that does not depend on $n$) such that
\[
\prob(R_k<\infty\ |\ R_{k-1}<\infty\ ;\
\XX_1,\XX_2,\ldots,\XX_{R_{k-1}})\ge e^{-C}.
\]
Let $A$ be the event $\{R_{n^{d-2}}<\infty\}$. Then
$\prob(A)\geq e^{-Cn^{d-2}}$. Consider the cubical shell $W=\{x:2n/3<\|x\|\leq 5n/6\}$. By
Green function estimates (see e.g. \cite{lawler}), there exists $c_1>0$ such that
\[
\prob
 \left(
 \exists{t\in(R_k,S_{k+1}]}: \XX_t=x \left| \{\XX_i\}_{i=1}^{R_k}\right.
 \right)\geq c_1n^{2-d}\cdot{\bf 1}_{R_k<\infty} \ \ \ \mbox{a.s.}
\]
for every $k$ and every $x\in W$.
 Therefore
$\prob(x\in\left[\XX^{(2n)}\right]|A)\geq \rho>0$ for every $x\in
W$. Consequently,
\[
\E\left(\left.\left|[-n,n)^d\cap\left[\XX^{(2n)}\right]\right|\
\right|\ A\right) \ge c_2 (2n)^d.
\]
for some constant $c_2=c_2(d)$. But
$\left|[-n,n)^d\cap\left[\XX^{(2n)}\right]\right|\leq (2n)^d$,
whence $\delta=c_2/2$ satisfies
\[
\prob\left(\left.\left|[-n,n)^d\cap\left[\XX^{(2n)}\right]\right| \ge \delta
(2n)^d\ \right|\ A\right) \ge \delta,
\]
so
\[
\prob\left(\left|[-n,n)^d\cap\left[\XX^{(2n)}\right]\right|\ge \delta
(2n)^d \right)\ge \delta \,\prob(A) \ge \delta e^{-Cn^{d-2}}.
\]
\end{proof}

\begin{proof}[Proof of Part \ref{item:meanz} of Theorem \ref{thm:zdcase}]
Since $\mu\neq\nu$, there  exists some $\rho\in\Omega$ with
$\nu(\rho)>\mu(\rho)$. Let $k(n)=(\log n)^\alpha$ with
$\alpha=1/(d-1)$. The singularity of $P$ and $Q$ follows
from the following claim.

\begin{claim}\label{claim:insquare}

\begin{enumerate}
Let $\delta$ be as in Lemma \ref{lem:largediv}. For every $n$, let
$A_n$ be the event that there exists a cube $\Lambda$ of side
length $k(n)$ in $[-n,n)^d$ such that
\begin{equation}\label{eq:incube}
\frac{\left|\left\{x\in
\Lambda:\omega(x)=\rho\right\}\right|}{|\Lambda|}
>\mu(\rho)+\frac{\delta\left(\nu(\rho)-\mu(\rho)\right)}{2}.
\end{equation}
\item\label{item:pnosquare}
$P$-almost surely, $A_n$ occurs only for finitely many values of $n$.
\item\label{item:qsquare}
$Q$-almost surely, $A_n$ occurs for all large enough $n$.
\end{enumerate}
\end{claim}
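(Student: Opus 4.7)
For part (1), since the scenery is i.i.d.\ with marginal $\mu$ under $P$, the $\rho$-fraction in any fixed cube $\Lambda$ of side $k(n)$ is an average of $k(n)^d$ independent Bernoulli$(\mu(\rho))$ variables. Hoeffding yields
\[
P\!\left(\tfrac{|\{x\in\Lambda:\omega(x)=\rho\}|}{|\Lambda|}>\mu(\rho)+\tfrac{\delta(\nu(\rho)-\mu(\rho))}{2}\right)\le e^{-c\,k(n)^d}
\]
for a constant $c>0$ depending only on $\delta$, $\mu(\rho)$, $\nu(\rho)$. A union bound over the at most $(2n)^d$ candidate cubes gives $P(A_n)\le(2n)^d\exp\!\bigl(-c(\log n)^{d/(d-1)}\bigr)$. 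Since $d/(d-1)>1$, this is summable in $n$, so Borel--Cantelli proves part (1).

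For part (2), I decouple the walk from the scenery. Under $Q$, conditionally on $\XX$ the scenery is a product measure with law $\nu$ on $[\XX]$ and $\mu$ off $[\XX]$. Let $B_n$ be the walk-measurable event that some cube $\Lambda\subset[-n,n)^d$ of side $k(n)$ satisfies $|\Lambda\cap[\XX]|\ge\delta\,k(n)^d$, and on $B_n$ choose such a cube $\Lambda^\ast$ measurably from $\XX$. The conditional mean $\rho$-fraction in $\Lambda^\ast$ equals $\mu(\rho)+\tfrac{|\Lambda^\ast\cap[\XX]|}{|\Lambda^\ast|}(\nu(\rho)-\mu(\rho))\ge\mu(\rho)+\delta(\nu(\rho)-\mu(\rho))$, exceeding the threshold in the definition of $A_n$ by at least $\tfrac{\delta}{2}(\nu(\rho)-\mu(\rho))$; Hoeffding applied to the independent labels inside $\Lambda^\ast$ therefore yields $Q(A_n^c\mid\XX)\le e^{-c'k(n)^d}$ on $B_n$, which is summable. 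It remains to show $\sum_n\Psi(B_n^c)<\infty$.

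To bound $\Psi(B_n^c)$ I chain many independent tries of Lemma \ref{lem:largediv} using the strong Markov property. Write $k=k(n)$, set $\sigma_0=0$ and $\sigma_{j+1}=\inf\{t>\sigma_j:\|\XX_t-\XX_{\sigma_j}\|_\infty=k\}$. By translation invariance of the increments, Lemma \ref{lem:largediv} applied with its parameter equal to $k/2$ says that, conditionally on $(\XX_s)_{s\le\sigma_j}$, the excursion $(\XX_s)_{\sigma_j\le s\le\sigma_{j+1}}$ visits at least $\delta k^d$ distinct points of the cube $\XX_{\sigma_j}+[-k/2,k/2)^d$ with probability at least $p:=\delta\exp(-C(k/2)^{d-2})$; by strong Markov these filling events are conditionally independent across $j$. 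Let $M_n$ be the number of $j$'s with $\sigma_{j+1}\le T_n$ and $\XX_{\sigma_j}$ at $\ell^\infty$-distance $\ge k$ from the boundary of $[-n,n)^d$ (so the excursion cube fits). Using $\E[\sigma_{j+1}-\sigma_j]\asymp k^2$ together with Donsker's invariance principle (for both $T_n\asymp n^2$ and the sum of excursion lengths), $M_n\ge n^2/(2k^2)$ with probability at least $1-e^{-n^{1/4}}$. Hence
\[
\Psi(B_n^c)\le e^{-n^{1/4}}+(1-p)^{\lfloor n^2/(2k^2)\rfloor}\le e^{-n^{1/4}}+\exp\!\Bigl(-\tfrac{\delta\,n^2}{2k^2}\,e^{-C(k/2)^{d-2}}\Bigr).
\]
With $k=(\log n)^{1/(d-1)}$ and $(d-2)/(d-1)<1$, the inner exponential grows slower than any polynomial, so $\Psi(B_n^c)=n^{-\omega(1)}$, hence summable; combining with the Hoeffding estimate of the previous paragraph, $\sum_n Q(A_n^c)<\infty$, and Borel--Cantelli closes part (2).

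The main technical obstacle is the strong-Markov chaining: one must verify that the $\delta$-filling events for the distinct excursions are genuinely conditionally independent, that the excursion cubes actually lie inside $[-n,n)^d$ for the indices counted by $M_n$, and establish the concentration $M_n\gtrsim n^2/k^2$ with high probability (e.g.\ via a second-moment bound on the partial sums $\sigma_j$ versus $T_n$). Everything else reduces to the two Hoeffding bounds, which are routine.
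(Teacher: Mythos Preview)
Your argument follows the paper's strategy closely: Part~(1) is identical (large-deviation bound, union over $O(n^d)$ cubes, Borel--Cantelli), and Part~(2) uses the same idea of chaining many strong-Markov applications of Lemma~\ref{lem:largediv} along the path, then invoking Borel--Cantelli. The paper organizes the tries slightly differently---it takes the $\sqrt{n}$ radius-crossing times $j(\ell)=\min\{j:\|\XX_j\|_\infty\ge 2\ell\,k(n)\}$ for $\ell=1,\dots,\sqrt{n}$, so the number of tries is trivially $\sqrt{n}$---and it merges the walk-filling and scenery steps into a single success event of probability $\ge\tfrac{\delta}{2}e^{-Ck(n)^{d-2}}$, rather than separating them as you do; but these are cosmetic differences.

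There is, however, a genuine gap in your count of tries. The assertion that $M_n\ge n^2/(2k^2)$ with probability at least $1-e^{-n^{1/4}}$ does not follow from Donsker's principle or a second-moment bound: Donsker only gives that $T_n/n^2$ converges in law to a nondegenerate limit, so $\Psi(T_n<cn^2)$ converges to a \emph{positive} constant for every $c>0$, and second-moment methods give at best polynomial decay. The good news is that this estimate is unnecessary. Since each of your excursions moves exactly $k$ in $\ell^\infty$-norm, you have $\|\XX_{\sigma_j}\|_\infty\le jk$ deterministically, so the first $\lfloor n/k\rfloor-1$ excursions all complete before $T_n$ and have centers at $\ell^\infty$-distance $\ge k$ from the boundary. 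Thus $M_n\ge n/k - O(1)$ \emph{surely}, and
\[
\Psi(B_n^c)\le (1-p)^{\lfloor n/k\rfloor-1}
\le \exp\Bigl(-\tfrac{\delta n}{2k}\,e^{-C(k/2)^{d-2}}\Bigr)=\exp\bigl(-n^{1-o(1)}\bigr),
\]
which is summable. With this fix your proof is complete and matches the paper's.
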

\end{proof}
\begin{proof}[Proof of claim \ref{claim:insquare}]
Part (\ref{item:pnosquare}) follows immediately from standard large
deviation bounds:
 For every cube $\Lambda$ of side length $k(n)$,
 \[
 \prob\left(
\frac{\left|\left\{x\in
\Lambda:\omega(x)=\rho\right\}\right|}{|\Lambda|}
>\mu(\rho)+\frac{\delta\left(\nu(\rho)-\mu(\rho)\right)}{2}\
 \right)\le e^{-c|\Lambda|}=e^{-c(\log n)^\frac{d}{d-1}},
 \]
 for some  $c=c(\delta,\mu,\nu)>0$.
 Since there are at most $2^d n^d$ such cubes $\Lambda$ in $[-n,n)^d$,
\[
 \prob(A_n)\le 2^dn^de^{-c(\log n)^\frac{d}{d-1}} \,.
\]
 Thus $\sum_{n=1}^\infty\prob(A_n)<\infty$, so by Borel-Cantelli
 only finitely many of the events $A_n$ occur.

 Part (\ref{item:qsquare}) can be deduced from Lemma
\ref{lem:largediv}. Indeed, fix $n$ and let $\{\XX_j\}$
be the random walk. for $\ell=1,\ldots,\sqrt{n}$, let
$j(\ell)=\min\{j:\|\XX_j\|\geq 2\ell \cdot k(n)\}$. Let $\Lambda_\ell$ be the
cube of side length $k(n)$ centered at $\XX_{j(\ell)}$.
By the weak law of large numbers, given the event
$|[\XX] \cap \Lambda_\ell| \ge \delta |\Lambda_\ell|$, the conditional probability
that $\Lambda_\ell$ satisfies the inequality (\ref{eq:incube}) is at least $1/2$.
Therefore, by Lemma
\ref{lem:largediv}, the
probability that none of the cubes $\Lambda_\ell$ with $\ell \in [1,\sqrt{n}]$ satisfy the
condition in (\ref{eq:incube}), is bounded by
\[
\left(1-\frac{\delta}{2} e^{-C(\log
n)^\frac{d-2}{d-1}}\right)^{\sqrt{n}}<e^{-n^\frac{1}{4}}
\]
The right-hand side is summable in $n$, so again by Borel-Cantelli we are done.
\end{proof}

\subsection{Oriented and biased random walk}
In this subsection we prove Part \ref{item:orient} of Theorem \ref{thm:zdcase}.
We start with a simple lemma that holds for general graphs and walks.

\begin{lemma}\label{lem:mart}
Let $G$ be a graph with a distinguished root
$v$, let $\Psi$ be a distribution on $\PP(G,v)$ and let $\Omega$ be the sample
space on which the measures $\mu$ and $\nu$ live.

Let $\kappa=(v_1,v_2,\ldots)$ be an ordering of the vertices in $G$. For $\omega\in\Omega^{V(G)}$ define
\begin{equation}\label{eq:omegn}
\omega^{(n)}=\{\eta\in\Omega^{V(G)}:\eta(v_i)=\omega(v_i),i=1,\ldots,n\}\subseteq\Omega^{V(G)}
\end{equation}
 and
\begin{equation}\label{eq:f}
f_n(\omega)=f_n^\kappa(\omega)= \frac {P\left(\omega^{(n)}\right)}
{Q\left(\omega^{(n)}\right)} \ \ \ \ \ \ \ ; \ \ \
\ \ \ \ f=f^\kappa=\lim_{n\to\infty} f_n^\kappa
\end{equation}
and
\begin{equation}\label{eq:g}
g_n(\omega)=g_n^\kappa(\omega)= \frac {Q\left(\omega^{(n)}\right)}
{P\left(\omega^{(n)}\right)} \ \ \ \ \ \ \ ; \ \ \
\ \ \ \ g=g^\kappa=\lim_{n\to\infty} g_n^\kappa
\end{equation}

Then,

\begin{enumerate}
\item The limit in (\ref{eq:f}) exists $Q$-almost surely and is the Radon-Nikodym derivative of (the absolute continuous part of) $P$ with respect to $Q$.
\item The limit in (\ref{eq:g}) exists $P$-almost surely and is the Radon-Nikodym derivative of (the absolute continuous part of) $Q$ with respect to $P$.
\item\label{item:ordf} For every two orderings $\kappa_1$ and $\kappa_2$, we have that $Q$-almost surely,
$f^{\kappa_1}=f^{\kappa_2}$.
\item\label{item:ordg}  For every two orderings $\kappa_1$ and $\kappa_2$, we have that $P$-almost surely,
$g^{\kappa_1}=g^{\kappa_2}$. % Thus, from now and on we will omit the superscript $\kappa$ from the functions $g$ and $f$.
%$P$-almost surely, $g$ does not depend on the ordering of the vertices.
\item\label{item:condabs}
\[
Q\ll P \ \Longleftrightarrow\ f>0 \ \ \ Q-\mbox{a.s.}
\]
and
\[
P\ll Q \ \Longleftrightarrow\ g>0 \ \ \ P-\mbox{a.s.}
\]
\item\label{item:condsing}
\[
Q\perp P \ \Longleftrightarrow\ f=0 \ \ \ Q-\mbox{a.s.} \
\Longleftrightarrow\ g=0 \ \ \ P-\mbox{a.s.}
\]
\end{enumerate}
\end{lemma}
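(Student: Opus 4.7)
The plan is to recognize $(f_n)$ and $(g_n)$ as nonnegative martingales and invoke the standard filtered Radon--Nikodym theorem, then deduce parts (3)--(6) by unwinding the Lebesgue decompositions of $P$ against $Q$ and vice versa. I would set $\FF_n=\sigma(\omega(v_1),\ldots,\omega(v_n))$, so that $\FF=\sigma(\bigcup_n\FF_n)$. Because $\mu(\omega)>0$ and $\nu(\omega)>0$ for every $\omega\in\Omega$, each cylinder $\omega^{(n)}$ has $P(\omega^{(n)})>0$ and $Q(\omega^{(n)})>0$, so $P|_{\FF_n}$ and $Q|_{\FF_n}$ are mutually equivalent and $f_n,g_n$ are genuine Radon--Nikodym derivatives. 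Writing any $A\in\FF_n$ as a disjoint union of cylinders, one obtains $\int_A f_{n+1}\,dQ = P(A) = \int_A f_n\,dQ$, so $f_n$ is a nonnegative $Q$-martingale; symmetrically $g_n$ is a nonnegative $P$-martingale.

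By Doob's martingale convergence theorem the $Q$-a.s.\ limit $f$ and the $P$-a.s.\ limit $g$ will exist and be finite. To prove parts (1) and (2) I would invoke the standard filtered Radon--Nikodym theorem (see, e.g., Durrett, \emph{Probability: Theory and Examples}): Fatou combined with the martingale identity $\int_A f_n\,dQ = P(A)$ for $A\in\FF_n$ shows that the measure $R$ defined by $dR=f\,dQ$ satisfies $R\le P$, and one then checks that $P_s:=P-R$ is mutually singular with $Q$, identifying $f=dP_{ac}/dQ$. Parts (3) and (4) will then be immediate, since once $f$ is identified with the intrinsic $Q$-equivalence class $dP_{ac}/dQ$, the limit $f^\kappa$ must agree with it $Q$-a.s.\ regardless of the ordering $\kappa$; likewise for $g$.

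For part (5) I would argue as follows. Let $N=\{f=0\}$ and pick $M$ with $Q(M)=0$ and $P_s(M^c)=0$. Since $P_{ac}\ll Q$ we have $P_{ac}(M)=0$, and $P_{ac}(N)=\int_N f\,dQ=0$, so $P(N\cap M^c)=0$; assuming $Q\ll P$ then forces $Q(N\cap M^c)=0$, and $Q(N\cap M)\le Q(M)=0$ gives $Q(N)=0$, i.e.\ $f>0$ $Q$-a.s. Conversely, if $f>0$ $Q$-a.s.\ and $P(A)=0$, then $P_{ac}(A)=\int_A f\,dQ=0$ forces $Q(A)=0$. The statement for $g$ and $P\ll Q$ is symmetric. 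Part (6) falls out of the same identification: $P\perp Q$ is equivalent to $P_{ac}=0$, i.e.\ $\int f\,dQ=0$, i.e.\ $f=0$ $Q$-a.s.\ (using $f\ge 0$); interchanging $P$ and $Q$ gives equivalence with $g=0$ $P$-a.s.

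The main technical point is the identification $f=dP_{ac}/dQ$, which requires the filtered Radon--Nikodym theorem rather than just Doob's convergence theorem (the martingale $f_n$ need not be uniformly integrable, and mass can escape onto the $Q$-null set supporting $P_s$). Everything else amounts to bookkeeping with the two Lebesgue decompositions, and I do not expect any genuine obstacle.
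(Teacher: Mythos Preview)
Your proposal is correct and follows exactly the approach the paper takes: the paper does not give a detailed argument but simply states that the lemma ``follows from standard martingale techniques, see e.g.\ Section~4.3.c of \cite{durrett},'' which is precisely the filtered Radon--Nikodym/Lebesgue decomposition result you invoke. Your write-up is in fact a fleshed-out version of that citation, so there is nothing to add or correct.
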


Lemma \ref{lem:mart} follows from standard martingale techniques,
see e.g. Section 4.3.c of \cite{durrett}.

We now prove a simple lemma that is very useful in proving absolute
continuity. Variants of this lemma appeared in \cite{levin} and in
\cite{BS}. Similarly to the previous lemma, this lemma holds for
general graphs and (possibly non-Markovian) walks.
\begin{lemma}\label{lem:shnitzel}
Let $\XX^{(1)}$ and $\XX^{(2)}$ be two independent samples of the
measure $\Psi$ on $\PP(G,v)$. If there exists $C>0$ such that for
every $n$
\begin{equation}\label{eq:exptails}
(\Psi\times\Psi)\left(\left|\left[\XX^{(1)}\right]\cap\left[\XX^{(2)}\right]\right|>n\right)
\le e^{-Cn}
\end{equation}
then there  exist $\mu\neq\nu$ such that the measures $P$ and $Q$ are indistinguishable.
\end{lemma}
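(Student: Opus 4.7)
The plan is a second-moment argument for the Radon--Nikodym derivative, in the spirit of Bolthausen--Sznitman~\cite{BS}. First I fix an arbitrary ordering $\kappa = (v_1, v_2, \ldots)$ of $V(G)$ and consider the nonnegative $P$-martingale $g_n = g_n^\kappa$ of Lemma~\ref{lem:mart}. Since $Q_\XX$ has Radon--Nikodym density $\prod_{x \in [\XX]} \nu(\omega(x))/\mu(\omega(x))$ with respect to $P$, averaging over $\Psi$ gives
\[
g_n(\omega) \;=\; \int_{\PP(G,v)} \prod_{i=1}^{n}\Bigl(\mathbf{1}_{v_i \notin [\XX]} + \mathbf{1}_{v_i \in [\XX]}\,\frac{\nu(\omega(v_i))}{\mu(\omega(v_i))}\Bigr)\,d\Psi(\XX).
\]

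Next I would square this formula and apply Fubini with two independent samples $\XX^{(1)}, \XX^{(2)} \sim \Psi$. Because $P$ is a product measure, the inner $\mu$-expectation factors over $v_1, \ldots, v_n$; the identities $\E_\mu[\nu/\mu] = 1$ and $\E_\mu[(\nu/\mu)^2] = M(\mu,\nu) := \sum_{\rho \in \Omega} \nu(\rho)^2/\mu(\rho)$ show that only coordinates in $[\XX^{(1)}]\cap[\XX^{(2)}]$ contribute nontrivially, each producing a factor $M$. Monotone convergence as $n \to \infty$ then yields
\[
\sup_n \E_P[g_n^2] \;=\; \int\!\!\int M^{|[\XX^{(1)}]\cap[\XX^{(2)}]|}\,d\Psi(\XX^{(1)})\,d\Psi(\XX^{(2)}).
\]
Since $M(\mu,\nu) = 1 + \chi^2(\nu,\mu)$ depends continuously on $\nu$ and equals $1$ when $\nu = \mu$, I can choose $\nu \neq \mu$ with $\log M < C$; the hypothesis~(\ref{eq:exptails}) then gives $\E_{\Psi\times\Psi}[M^{|[\XX^{(1)}]\cap[\XX^{(2)}]|}] < \infty$. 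Consequently $\{g_n\}$ is $L^2(P)$-bounded and converges $P$-a.s.\ and in $L^1(P)$ to some $g$ with $\E_P[g] = 1$; by Lemma~\ref{lem:mart} this identifies $g = dQ/dP$ and gives $Q \ll P$.

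The hard part will be upgrading $Q \ll P$ to full mutual absolute continuity (the sense in which ``indistinguishable'' is defined in the paper), since Proposition~\ref{prop:01law} is not assumed at this level of generality. My plan is to show $P(g > 0) = 1$ and invoke Lemma~\ref{lem:mart}(\ref{item:condabs}). First, Paley--Zygmund applied to $g \geq 0$ with $\E_P[g] = 1$ and $\E_P[g^2] < \infty$ yields $P(g > 0) \geq 1/\E_P[g^2] > 0$. To lift this to probability one, I would observe that $\{g > 0\}$ is a tail event for the product measure $P = \mu^{V(G)}$: since $\Omega$ is finite and $\mu, \nu$ are strictly positive, modifying $\omega$ at a single vertex scales each $g_n(\omega)$ --- and hence the limit $g(\omega)$ --- by a factor bounded in $[c, c^{-1}]$ for some $c = c(\mu,\nu) > 0$, so any finite modification of $\omega$ preserves $\{g > 0\}$. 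Kolmogorov's $0$--$1$ law then forces $P(g > 0) = 1$, yielding $P \ll Q$ and completing the proof.
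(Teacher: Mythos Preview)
Your $L^2$ computation for $\sup_n \E_P[g_n^2]$ is identical to the paper's, arriving at the same expression $\E_{\Psi\times\Psi}\bigl[\zeta^{|[\XX^{(1)}]\cap[\XX^{(2)}]|}\bigr]$ (the paper writes $\zeta$ for your $M$). The one genuine difference is how mutual absolute continuity is obtained from $Q\ll P$: the paper simply invokes Proposition~\ref{prop:01law}, whereas you supply a direct argument that $\{g>0\}$ is a tail event for $P$ and apply Kolmogorov's $0$--$1$ law. Your route is more self-contained and, as you correctly observe, does not require the transitive-Markov hypotheses of Proposition~\ref{prop:01law}; this actually matters, since the lemma is later applied to non-Markovian path measures in Theorem~\ref{thm:zdcase}(\ref{item:genpath3}), where Proposition~\ref{prop:01law} is not available. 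The paper in fact uses exactly your tail-event device elsewhere (at the end of the proof of Lemma~\ref{lem:super}), so the argument is entirely in its spirit. One small simplification: Paley--Zygmund is unnecessary, since $\E_P[g]=1$ already forces $P(g>0)>0$.
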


\begin{proof}
 Let $\mu\neq\nu$ be such that
\begin{equation}\label{eq:defgamma}
\zeta:=\int\left[\frac{d\nu(x)}{d\mu(x)}\right]^2d\mu(x)=\int\left[\frac{d\nu(x)}{d\mu(x)}\right]d\nu(x)<e^C \,,
\end{equation}

with $C$ as in (\ref{eq:exptails}).
 Let
$v_1,v_2,\ldots$ be the same ordering of the vertices in $G$ as in
Lemma \ref{lem:mart}. For a configuration $\omega\in\Omega^G$ we
look again at the  functions $g$ and $g_n$, defined in
(\ref{eq:g}). By Proposition \ref{prop:01law}, it suffices to prove that
$Q$ is absolutely continuous with respect to $P$; by Lemma \ref{lem:mart},
this is equivalent to uniform integrability
of the martingale $\{g_n\}$ (w.r.t. $P$). To establish this, we will show
that $\{g_n\}$ is bounded in $L^2(P)$.

\[
g_n(\omega)= \frac {Q\left(\omega^{(n)}\right)}
{P\left(\omega^{(n)}\right)}
=\int_{\PP(G,v)}%\left[
\frac {\qx\left(\omega^{(n)}\right)}
{P\left(\omega^{(n)}\right)}
%\right]
d\Psi(\XX),
\]

where, as before,
%$P_\XX$ and $Q_\XX$ are the measures $P$ and $Q$ conditioned on the value of the path $\XX$.
\begin{equation}\label{eq:defqx}
\qx=\mu^{V(G)-[\XX]}\times\nu^{[\XX]}.
\end{equation}
Let
\[
g_n^{(\XX)}(\omega)=\frac
{\qx\left(\omega^{(n)}\right)}
{P\left(\omega^{(n)}\right)}.
\]

Then
\begin{eqnarray}
\nonumber \E_P(g_n^2)&=&
%\E_P\left[\left.\E_P\left(g_n^{(\XX^{(1)})}(\omega)\cdot
%g_n^{(\XX^{(2)})}(\omega)\ \right|\ \XX^{(1)},\XX^{(2)}\right)\right]\\
\int_{{\PP(G,v)}^2}\E_P\left[g_n^{(\XX^{(1)})}(\omega)\cdot
g_n^{(\XX^{(2)})}(\omega)\right]d\Psi(\XX^{(1)})d\Psi(\XX^{(2)})\\
 &=&
\int_{{\PP(G,v)}^2}
 \prod_{i=1}^n \E_P
\left[
\frac
{Q_{\XX^{(1)}}\left(\omega(v_i)\right)}
{P\left(\omega(v_i)\right)}
\cdot\frac
{Q_{\XX^{(2)}}\left(\omega(v_i)\right)}
{P\left(\omega(v_i)\right)}
\right]
d\Psi(\XX^{(1)})d\Psi(\XX^{(2)})
\label{eq:moshe}
\end{eqnarray}

For given $\XX^{(1)}$ and $\XX^{(2)}$, the product inside the integral in (\ref{eq:moshe}) naturally breaks into four products: for values of $i$ satisfying
\begin{eqnarray}
 i\in{\left[\XX^{(1)}\right]}^c\cap {\left[\XX^{(2)}\right]}^c \label{eq:kl1}, \\
 \nonumber\\
 i\in{\left[\XX^{(1)}\right]}^c\cap \left[\XX^{(2)}\right] \label{eq:kl2}, \\
 \nonumber\\
 i\in{\left[\XX^{(1)}\right]}\cap {\left[\XX^{(2)}\right]}^c \label{eq:kl3}, \\
 \nonumber\\
 \mbox{or }i\in\left[\XX^{(1)}\right]\cap \left[\XX^{(2)}\right]. \label{eq:kl4}
\end{eqnarray}

It is easy to see that for $i$ as in (\ref{eq:kl1}), (\ref{eq:kl2}) and (\ref{eq:kl3}),
\[
\E_P\left[ \frac {Q_{\XX^{(1)}}\left(\omega(v_i)\right)}
{P\left(\omega(v_i)\right)} \cdot\frac
{Q_{\XX^{(2)}}\left(\omega(v_i)\right)}
{P\left(\omega(v_i)\right)} \right]=1,
\]

while for $i$ as in (\ref{eq:kl4}),
\[
\E_P\left[ \frac {Q_{\XX^{(1)}}\left(\omega(v_i)\right)}
{P\left(\omega(v_i)\right)} \cdot\frac
{Q_{\XX^{(2)}}\left(\omega(v_i)\right)}
{P\left(\omega(v_i)\right)} \right]= \zeta
\]

so

\begin{eqnarray*}
\E_P\left[g_n^{(\XX^{(1)})}(\omega)\cdot g_n^{(\XX^{(2)})}(\omega)\right]
%=\prod_{i\in\left( \XX^{(2)}\cap \XX^{(1)}\right)} \E_P\left[
%\frac
%{Q_{\XX}\left(\omega|_{\{v_i\}}\right)}
%{P_{\XX}\left(\omega|_{\{v_i\}}\right)}\right]
=\zeta^{\left|\left[\XX^{(1)}\right]\cap\left[\XX^{(2)}\right]\cap\{v_1,\ldots,v_n\}\right|}.
\end{eqnarray*}

Therefore, by the choice of $\zeta$ and by (\ref{eq:defgamma}),
\[
\sup_{n}\left\|g_n\right\|_2=\E_{\Psi\times\Psi}\left[\zeta^{\left|\left[\XX^{(1)}\right]\cap\left[\XX^{(2)}\right]\right|}\right]
%\leq\E_{\Psi\times\Psi}\left[C^{\left|\left[\XX^{(1)}\right]\cap\left[\XX^{(2)}\right]\right|}\right]<\infty\,.
<\infty\,.
\]

\end{proof}

The following is a corollary of the proof:
\begin{corollary}\label{cor:iff}
There exist distinct $\mu$ and $\nu$ such that $P$ and $Q$ are
indistinguishable and the Radon-Nikodym derivative is in $L^2$ if
and only if (\ref{eq:exptails}) holds for some $C$ and all $n$.
\end{corollary}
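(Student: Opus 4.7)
My plan is to prove the corollary as an equivalence. The \emph{if} direction is essentially contained in the proof of Lemma \ref{lem:shnitzel}. Once one chooses $\mu\neq\nu$ satisfying (\ref{eq:defgamma}) with $\zeta<e^C$, the computation leading to (\ref{eq:moshe}) establishes $\sup_n\|g_n\|_{L^2(P)}<\infty$. Let $\FF_n$ be the $\sigma$-algebra generated by $\omega(v_1),\ldots,\omega(v_n)$ and set $g:=dQ/dP$. Since $g_n$ is clearly $\FF_n$-measurable and satisfies $\E_P[g_n \Bbbone_A]=Q(A)$ for every $A\in\FF_n$, we have $g_n=\E_P[g\mid\FF_n]$. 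The $L^2$-bounded martingale $\{g_n\}$ therefore converges to $g$ in $L^2(P)$, so $g\in L^2(P)$, and indistinguishability follows as in Lemma \ref{lem:shnitzel} via Proposition \ref{prop:01law}.

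For the \emph{only if} direction, suppose $\mu\neq\nu$ yield $P,Q$ indistinguishable with $g=dQ/dP\in L^2(P)$. Since $\mu\neq\nu$, Cauchy--Schwarz is strict, so $\zeta=\int(d\nu/d\mu)^2\,d\mu>1$. Rerunning the computation from the proof of Lemma \ref{lem:shnitzel} yields the identity
\[
\E_P[g_n^2]\;=\;\E_{\Psi\times\Psi}\!\left[\zeta^{\left|\left[\XX^{(1)}\right]\cap\left[\XX^{(2)}\right]\cap\{v_1,\ldots,v_n\}\right|}\right].
\]
The $L^2$ martingale convergence theorem gives $\E_P[g_n^2]\nearrow\E_P[g^2]<\infty$, while the exponent on the right is monotone nondecreasing in $n$ and $\zeta>1$, so monotone convergence yields
\[
\E_{\Psi\times\Psi}\!\left[\zeta^N\right]\;=\;\E_P[g^2]\;<\;\infty,\qquad N:=\left|\left[\XX^{(1)}\right]\cap\left[\XX^{(2)}\right]\right|.
\]
Markov's inequality now gives $(\Psi\times\Psi)(N>n)\leq\E[\zeta^N]\,\zeta^{-n}$; choosing any $C\in(0,\log\zeta)$ small enough to absorb the constant $\E[\zeta^N]$ for $n$ beyond some threshold (and shrinking $C$ further, if necessary, so that the trivial bound $1$ dominates $e^{-Cn}$ for the remaining finitely many values), we obtain (\ref{eq:exptails}) for some $C>0$ and every $n$.

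The main obstacle is the interchange of limit and expectation in both factors of the key identity: on the $P$-side this is $L^2$-martingale convergence, which rests on identifying $g_n$ with $\E_P[g\mid\FF_n]$; on the $\Psi\times\Psi$-side it is monotone convergence, which applies precisely because $\zeta>1$ forces the integrand to be nondecreasing in $n$. Once this is in hand, the rest is the computation already performed inside the proof of Lemma \ref{lem:shnitzel} together with a routine Markov estimate.
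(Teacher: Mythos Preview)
Your proof is correct and follows exactly the approach the paper intends: the paper offers no separate argument, simply declaring the result ``a corollary of the proof'' of Lemma~\ref{lem:shnitzel}, and you have spelled out both directions using the identity $\E_P[g_n^2]=\E_{\Psi\times\Psi}\bigl[\zeta^{|[\XX^{(1)}]\cap[\XX^{(2)}]\cap\{v_1,\ldots,v_n\}|}\bigr]$ derived there, together with $L^2$-martingale convergence and monotone convergence. The only quibble is the phrasing of your final constant-absorption step (you want $e^{-Cn}$ to dominate the tail probability, not ``the trivial bound $1$'' to dominate $e^{-Cn}$), but the passage from $\E_{\Psi\times\Psi}[\zeta^N]<\infty$ with $\zeta>1$ to an exponential tail bound is standard.
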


Next, we prove the $d\geq 4$ case of Part (\ref{item:orient}) of
Theorem \ref{thm:zdcase}. We start with the  special case of simple {\em oriented\/}
random walk where the increments give equal weight to the $d$ standard basis vectors.
 For this case, all we need is the following lemma
from Cox and Durrett~\cite{cox-durrett} (who attribute the idea to H. Kesten).

\begin{lemma}\label{lem:notmanyintersectorient}
Let $\XX^{(1)}$ and $\XX^{(2)}$ be two independent paths of a
nearest-neighbor random walk in $\Z^d$, $d\geq 4$ with the simple
oriented transition kernel. Then there exists $C>0$ such that
\[
\E\left[e^{C\left|\left[\XX^{(1)}\right]\cap\left[\XX^{(2)}\right]\right|}\right]<\infty.
\]
\end{lemma}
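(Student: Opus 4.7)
\begin{proofsect}{Proof proposal}
The plan is to reduce $\bigl|\bigl[\XX^{(1)}\bigr] \cap \bigl[\XX^{(2)}\bigr]\bigr|$ to the number of returns to zero of a difference random walk, and then exploit the transience of that walk in dimensions $d \geq 4$ to obtain a geometric tail.

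First I would exploit the ``oriented'' structure. Since each step of $\XX$ is a standard basis vector, after $n$ steps the sum of coordinates satisfies $\sum_k (\XX_n)_k = n$. In particular each walk is strictly monotone along $(1,\dots,1)$ and hence self-avoiding, and if a vertex $v$ lies on both trails, then $v$ must be visited by each walk at the common time $\sum_k v_k$. Consequently,
\[
\bigl|\bigl[\XX^{(1)}\bigr] \cap \bigl[\XX^{(2)}\bigr]\bigr| \;=\; N \;:=\; \#\bigl\{\, n \geq 0 : \XX^{(1)}_n = \XX^{(2)}_n\,\bigr\}.
\]

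Next I would introduce the difference walk $Y_n = \XX^{(1)}_n - \XX^{(2)}_n$, whose i.i.d.\ increments take the values $e_i - e_j$ for $i,j \in \{1,\dots,d\}$, each with probability $1/d^2$. These increments are centered and supported in the hyperplane $H = \{x \in \R^d : \sum_k x_k = 0\}$; since the set $\{e_i - e_j : i \neq j\}$ spans $H$, the walk $Y$ lives on a $(d-1)$-dimensional sublattice and has nondegenerate covariance there. For $d \geq 4$ we have $d-1 \geq 3$, so the local central limit theorem yields $\prob(Y_n = 0) = O(n^{-(d-1)/2})$, which is summable. Hence $Y$ is transient, and the return probability $p := \prob(Y_n = 0 \text{ for some } n \geq 1)$ is strictly less than $1$.

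By the strong Markov property applied at successive visits of $Y$ to the origin, the total number of visits $N$ is geometric: $\prob(N \geq k) = p^{k-1}$ for $k \geq 1$. Therefore, for any $C \in \bigl(0, \log(1/p)\bigr)$,
\[
\E\bigl[e^{C N}\bigr] \;=\; \frac{(1-p)\,e^{C}}{1 - p\,e^{C}} \;<\; \infty,
\]
which is precisely the claim. The only point needing care is transience of $Y$, but this is routine: the spanning property of $\{e_i - e_j\}$ in $H$ is elementary linear algebra, and transience of a centered, nondegenerate random walk with bounded increments in dimension at least three is a standard consequence of the LCLT, so no serious obstacle arises.
\end{proofsect}
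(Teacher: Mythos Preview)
Your proposal is correct and follows essentially the same approach as the paper: use the oriented structure to equate the trail intersection with the number of coincidences $\XX^{(1)}_n=\XX^{(2)}_n$, view this as the number of returns to zero of the $(d-1)$-dimensional difference walk, and invoke transience to get a geometric tail. The only difference is that you spell out the LCLT justification for transience and the explicit geometric-series computation, whereas the paper (following Cox--Durrett) simply cites these as standard facts.
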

We recall the short proof for the reader's convenience.

\begin{proof}
for every $k$ we have $\|\XX^{(1)}_k\|_1=\|\XX^{(2)}_k\|_1=k$. Therefore,
if $\XX^{(1)}(j)=\XX^{(2)}(k)$ then $j=k$. Thus, $\left|\left[\XX^{(1)}\right]\cap\left[\XX^{(2)}\right]\right|$ is the
number of returns to zero of the Markov chain $\{\XX^{(1)}_k-\XX^{(2)}_k\}_{k=1}^\infty$. This Markov chain is a
$d-1$ dimensional random walk, and therefore is transient for $d\geq 4$. The lemma follows from the general fact that the number of
returns to the origin of a transient Markov chain is a geometric random variable.
\end{proof}

\vspace{0.5cm}

To prove Part (\ref{item:orient}) of Theorem \ref{thm:zdcase}
for all nearest neighbor walks with nonzero mean, the following more general lemma is needed.
\begin{lemma}\label{lem:notmanyintersect}
Let $\XX^{(1)}$ and $\XX^{(2)}$ be two independent paths of a
nearest-neighbor random walk in $\Z^d$, $d\geq 4$ with non-zero
mean. Then there exists $C>0$ such that
\[
\E\left[e^{C\left|\left[\XX^{(1)}\right]\cap\left[\XX^{(2)}\right]\right|}\right]<\infty.
\]
\end{lemma}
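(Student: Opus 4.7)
Since $|[\XX^{(1)}]\cap[\XX^{(2)}]|$ is bounded above by the number of intersection pairs
$N := \#\{(j,k) : \XX^{(1)}_j = \XX^{(2)}_k\}$,
it suffices to produce $C>0$ with $\E[e^{CN}]<\infty$. The idea is to adapt the proof of Lemma~\ref{lem:notmanyintersectorient}, in which the identity $\|\XX_k\|_1 = k$ reduces the intersection count to the number of returns of a mean-zero random walk in $\Z^{d-1}$. For a general nonzero-mean nearest-neighbor walk this identity fails, but an analogous monotone structure persists if we reparametrize the walks by level of a coordinate in which they drift, rather than by time.

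Since $m := \E[\XX_1] \neq 0$, we may relabel the standard basis so that $m_1 > 0$. The first-coordinate process $X^{(i)}_j[1]$ is then a one-dimensional random walk with positive drift $m_1$ and increments in $\{-1,0,+1\}$. For each integer $n \ge 0$ let $H^{(i)}_n := \min\{j: X^{(i)}_j[1] = n\}$ and $U^{(i)}_n := \XX^{(i)}_{H^{(i)}_n}$; since nearest-neighbor steps give zero overshoot in the $1$-d walk, $U^{(i)}_n \cdot e_1 = n$ exactly. By the strong Markov property and translation invariance in the remaining coordinates, the perpendicular parts $\tilde U^{(i)}_n \in \Z^{d-1}$ form random walks with iid increments, of identical distribution for $i=1,2$. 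Hence the difference $D_n := \tilde U^{(1)}_n - \tilde U^{(2)}_n$ is a mean-zero random walk in $\Z^{d-1}$, which is transient whenever $d-1 \ge 3$; in particular, the number of returns of $D_n$ to $0$ has exponential moments.

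Alongside $D_n$, we use the slab visit counts $V^{(i)}_n := \#\{j: X^{(i)}_j[1] = n\}$, which by standard $1$-d positive-drift theory have exponential tails uniformly in $n$. A contribution to $N$ at level $n$ forces the walks to meet there, which in turn (given the slab excursion sizes) confines $|D_n|$ to a random window with exponential tails. To convert these ingredients into $\E[e^{CN}] < \infty$, we decompose the joint evolution of $(\XX^{(1)}, \XX^{(2)})$ via joint regeneration levels---levels $L$ at which both walks reach $L$ for the first time and neither returns below $L$ afterward. The positive drift of each walk, together with the transience of $D_n$, ensures that joint regenerations occur a.s.\ with exponential-tail gaps, and that the contribution to $N$ between two successive joint regenerations is an iid copy of a random variable with exponential moments (bounded by a functional of the slab excursions and the local behavior of $D_n$). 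Summing a geometric number of iid exp-moment contributions gives $\E[e^{CN}] < \infty$ for $C>0$ sufficiently small.

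\textbf{Main obstacle.} The heart of the argument is the construction and control of the joint regeneration structure: verifying that joint regenerations recur a.s.\ with exponential-tail gaps, and that the intersection contribution between consecutive joint regenerations has an exponential moment. Both require the transience of the $(d-1)$-dimensional walk $D_n$, which holds precisely when $d \ge 4$, matching the critical dimension in part~(\ref{item:orient}) of Theorem~\ref{thm:zdcase}.
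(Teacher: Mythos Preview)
The paper does not actually prove Lemma~\ref{lem:notmanyintersect}; immediately after stating it, the authors write that it ``is a special case of the first part of Theorem~2.4 of~\cite{BS}, and all proofs of the lemma that we know are difficult.'' So there is no in-paper argument to compare against, only the reference to Bolthausen--Sznitman.

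Your outline is in the right spirit and is, in fact, close to the strategy in~\cite{BS}: reparametrize by the level of a drifting coordinate, observe that the perpendicular difference process $D_n$ is a genuinely $(d-1)$-dimensional mean-zero random walk (hence transient for $d\ge 4$), and build a joint regeneration structure to cut the problem into i.i.d.\ blocks with exponential-moment contributions. That said, what you have written is a plan, not a proof. The steps you label ``Main obstacle''---existence of joint regeneration levels, exponential tails for the inter-regeneration gap, and an exponential moment for the number of intersection pairs within a block---are exactly the content of the Bolthausen--Sznitman argument, and they are not short: one must control simultaneously the one-dimensional excursions below the current level for \emph{both} walks and the displacement of $D_n$, and show that the block contribution to $N$ has a moment generating function finite in a neighborhood of zero uniformly over the admissible starting configurations. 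Your sentence ``summing a geometric number of i.i.d.\ exp-moment contributions'' also hides a point: there are infinitely many regeneration blocks, so one needs that only finitely many blocks contribute any intersections at all (this is where transience of $D_n$ enters), with exponential tails on that count.

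In short: your sketch identifies the correct mechanism and the correct critical dimension, and matches the route the paper defers to, but the substantive work---the joint regeneration construction and the moment bounds---remains to be done. The paper's own assessment that ``all proofs \ldots\ are difficult'' is a fair warning that filling in these details is nontrivial.
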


Lemma \ref{lem:notmanyintersect} is a special case of the first part of Theorem 2.4 of \cite{BS},
 and all proofs of the lemma that we know are difficult.

\medskip

\noindent
Next, we establish the case $d\leq 3$ of Part (\ref{item:orient}) of Theorem \ref{thm:zdcase}.
 We will use a simple counting argument.
Let $m$ be the drift of the random walk, and assume w.l.o.g. that
$\langle m,e_1\rangle>0$ and that $\langle m,e_1\rangle\geq|\langle m,e_i\rangle|$ for every $i$.
%As before, let $\rho$ be such that $\nu(\rho)>\mu(\rho)$.
 Given $n$, let
\[
\hfC(n)=\{x\in(n/2,n]\times[-n,n]^{d-1}:\exists_{k\geq 0}
\|x-km\|_1<n^{1/2}\}.
\]

We will use the following statement in order to establish singularity of $P$ and $Q$:
\begin{claim}\label{claim:oriencount}
Let $\XX$ be a nearest-neighbor random walk in $\Z^d$ with mean $m \ne 0$.
if $d\leq 3$, then there exists $\rho>0$ such that for every $n$ large
enough,
\[
\Psi\left(\frac{\left|[\XX]\cap
\hfC(n)\right|}{\sqrt{\left|\hfC(n)\right|}}>\rho\right)>\rho.
\]
\end{claim}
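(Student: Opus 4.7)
The plan is to estimate $|\hfC(n)|$ geometrically, show by a first/second--moment argument that $\XX$ visits of order $n$ distinct vertices in $\hfC(n)$ with probability bounded below, and then observe that $n$ and $\sqrt{|\hfC(n)|}$ are of comparable order precisely when $d\leq 3$.

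\textbf{Volume estimate.} The set $\hfC(n)$ is essentially an $\ell^1$--tube of radius $\sqrt n$ around the drift segment $\{km:k\in(n/(2m_1),n/m_1]\}$; the constraint $(n/2,n]\times[-n,n]^{d-1}$ is automatic because $|\langle m,e_i\rangle|\leq m_1$. The segment has length of order $n$, and the perpendicular $(d-1)$--dimensional cross--section of an $\ell^1$--ball of radius $\sqrt n$ has volume of order $n^{(d-1)/2}$, so
\[
|\hfC(n)|\ \text{is of order}\ n^{(d+1)/2},\qquad \sqrt{|\hfC(n)|}\ \text{is of order}\ n^{(d+1)/4},
\]
which for $d\leq 3$ is at most a constant times $n$.

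\textbf{Mean and variance of visits.} Let $N_n=|\{k\geq 0:\XX_k\in\hfC(n)\}|$. For $k$ in the interval $I_n=[n/(2m_1)+\sqrt n/m_1,\ n/m_1-\sqrt n/m_1]$, which has length of order $n$, write $\XX_k=km+S_k$ where $S_k$ is a centered random walk. The central limit theorem gives $q>0$ with $\Psi(\|S_k\|_1\leq\sqrt n/2)\geq q$ uniformly in $k\in I_n$; on this event $\XX_k\in\hfC(n)$, so $\E[N_n]\geq q|I_n|/2$ is of order $n$. On the other hand $N_n$ is dominated by the sojourn time $T_n$ of $\XX$ in the slab $\{x_1\in(n/2,n]\}$; since the first coordinate of $\XX$ is a biased one--dimensional nearest--neighbor walk whose level--sojourn times have exponential tails, $T_n$ has mean and variance of order $n$ and $n$, respectively, giving $\E[N_n^2]\leq\E[T_n^2]=O(n^2)$.

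\textbf{Cauchy--Schwarz and conclusion.} Put $V_n=|[\XX]\cap\hfC(n)|$ and $M_n=\sum_{v\in\hfC(n)}L_v^2$ where $L_v$ is the local time of $\XX$ at $v$. Cauchy--Schwarz yields
\[
N_n=\sum_{v\in\hfC(n)}L_v\,\Bbbone_{\{L_v\geq 1\}}\leq\sqrt{V_n\,M_n},\qquad V_n\geq N_n^2/M_n.
\]
Because $m\neq 0$ the walk $\XX$ is transient, so the Green function satisfies $G(0,v)\leq G(0,0)=:G_0<\infty$; a last--passage decomposition at $v$ gives $\E[L_v^2]\leq(2G_0-1)G(0,v)$, hence $\E[M_n]\leq(2G_0-1)\E[N_n]=O(n)$. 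Paley--Zygmund applied to $N_n$ (using $\E[N_n]$ of order $n$ and $\E[N_n^2]=O(n^2)$) together with Markov's inequality for $M_n$ produce an event of probability bounded below on which $V_n\geq c_0 n$; dividing by $\sqrt{|\hfC(n)|}=O(n)$ for $d\leq 3$ proves the claim with any sufficiently small $\rho$.

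\textbf{Main obstacle.} The delicate point is the pair of second--moment estimates $\E[N_n^2]=O(n^2)$ and $\E[M_n]=O(n)$: both rely crucially on the assumption $m\neq 0$, through the sub--Gaussian concentration of level sojourn times and the boundedness of the Green function of a drifted walk. The dimensional threshold is tight because $n/\sqrt{|\hfC(n)|}=n^{(3-d)/4}$, which is bounded below only for $d\leq 3$.
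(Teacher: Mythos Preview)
Your argument is correct; it differs from the paper's mainly in how you pass from ``many visits'' to ``many distinct vertices'' and in how you bound the upper tail.

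The paper and you share the volume estimate $|\hfC(n)|=O(n^{(d+1)/2})$ and the CLT lower bound $\Psi\bigl(\XX_k\in\hfC(n)\bigr)\geq c_0$ for $k$ in a range of order $n$. From there the paper is more direct: it uses the last-exit decomposition
\[
\E\bigl|[\XX]\cap\hfC(n)\bigr|=\gamma\sum_{i\geq 1}\Psi\bigl(\XX(i)\in\hfC(n)\bigr)\geq c_1 n,
\]
where $\gamma>0$ is the escape probability (finite because the drifted walk is transient), so that the expected number of \emph{distinct} vertices is already of order $n$; then a large-deviation upper tail $\Psi\bigl(|[\XX]\cap\hfC(n)|>an\bigr)\leq\Psi(U(n)>an)<e^{-\theta n}$ turns this first moment into $\Psi(V_n>\rho' n)>\rho'$ by a truncated Markov argument. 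You instead run Paley--Zygmund on the total visit count $N_n$ (using $\E N_n^2=O(n^2)$ via the slab sojourn time) and then convert visits to range with the Cauchy--Schwarz bound $V_n\geq N_n^2/M_n$ together with $\E M_n=(2G_0-1)\E N_n=O(n)$. Your route is a little longer but perfectly sound; the escape-probability trick in the paper simply collapses your two-step conversion (Paley--Zygmund plus Cauchy--Schwarz) into a single first-moment computation for the range, and the exponential tail replaces your second-moment control. Two minor remarks: the first coordinate is a \emph{lazy} biased one-dimensional walk (increments $0,\pm1$), which does not affect your tail or moment statements; and for $\E N_n^2=O(n^2)$ you only need $\E T_n^2=O(n^2)$, which follows from $\E T_n=O(n)$ and the Markov-property bound $\E[T_n^2]\leq 2\,\E T_n\cdot\sup_j \E_j[T_n]=O(n)\cdot O(n)$, so the finer claim $\mathrm{Var}\,T_n=O(n)$ is not needed.
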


\begin{proof}
\[
U(n) := \left|[\XX]\cap (n/2,n]\times[-n,n]^{d-1}\right| \geq
\left|[\XX]\cap \hfC(n)\right|
\]
satisfies
\begin{equation}\label{eq:joseph}
\Psi(U(n)>an)<e^{-\theta n}
\end{equation}
for $a>\langle m,e_1\rangle^{-1}$ and $\theta=\theta(a)>0$. On the
other hand,
\begin{eqnarray}\label{eq:cltC}\nonumber
\E_\Psi(\left|[\XX]\cap \hfC(n)\right|) &=&
 \sum_{i=1}^\infty\Psi\left[\XX(i)\in\hfC(n)
  \ \& \ \XX(j)\neq\XX(i)\mbox{ for all }j>i\right] \\
&=&\gamma\sum_{i=1}^\infty\Psi\left[\XX(i)\in\hfC(n)\right] \, \ge c_1 n \,.
\end{eqnarray}
where $\gamma$ is the escape probability of the random walk. To
see that the last inequality in (\ref{eq:cltC}) holds, note that for
$\frac{5}{8\langle m,e_1\rangle} n<i<\frac{7}{8\langle m,e_1\rangle}
n$,
\begin{eqnarray*}
\Psi\left(\XX(i)\in\hfC(n)\right) \ge \Psi\left(\|\XX(i)-E(\XX(i))\|_1<\sqrt{n} \right)
\ge c_0>0.
\end{eqnarray*}

Note that $|\hfC(n)|=O(n^{1+\frac{d-1}{2}})$ and thus $|\hfC(n)|=O(n^2)$
for $d\leq 3$. In conjunction with (\ref{eq:cltC}) and (\ref{eq:joseph}),
we deduce the existence of positive $\rho$ such that
\[
\Psi\left(\frac{\left|[\XX]\cap
\hfC(n)\right|}{\sqrt{\left|\hfC(n)\right|}}>\rho\right)>\rho,
\]
as desired.
\end{proof}

\begin{proof}[Proof of singularity for $d\leq 3$]
Let $n_k=2^k$. Let $A_k$ be the event
\[
A_k=\left\{\frac{\left|\XX\cap
\hfC(n_k)\right|}{\sqrt{\left|\hfC(n_k)\right|}}>\rho\right\}.
\]

Let $\xi\in\Omega$ be s.t. $\nu(\xi)>\mu(\xi)$.

Then for every $k$, Let $B_k$ be the event
\[
B_k=
 \left\{\#\left\{x\in \hfC(n_k):\omega(x)=\xi
 \right\}\geq\mu(\xi)\left[|\hfC(n_k)|-\rho\sqrt{|\hfC(n_k)|}\right]+\rho\nu(\xi)\sqrt{|\hfC(n_k)|}
 \right\}
\]

Let  $\tQ$ be the law of the pair $(\XX,\omega)$, where $\XX\in
\PP(G,v)$ is a random path sampled from $\Psi$ and $\omega$ is a
random scenery sampled from $Q_\XX$. In other words, $\tQ$ is a
Borel measure on $\PP(G,v)\times\Omega^{V(G)}$, and for Borel sets
$\Phi \subset \PP(G,v)$ and $A \subset \Omega^{V(G)}$, it satisfies
\begin{equation}\label{eq:tp}
\tQ(\Phi \times A)=\int_\Phi Q_\XX(A) \,d\Psi(\XX).
\end{equation}

Then (under both $P\times\Psi$ and $\tQ$) the $B_{k}$-s are
independent conditioned on $\XX$, and for all $k$ large, by the
central limit theorem and by stochastic domination,

\begin{equation}\label{eq:QPBk}
\tQ(B_{k}|A_k)\geq1/2 \ \ \ ; \ \
\gamma:=\lim_{k\to\infty}P(B_{k})<1/2 \ \ \ ; \ \ \tQ(B_k|\XX)\geq
P(B_{k})\ \  \Psi-\mbox{a.s.}
\end{equation}

$\Psi(A_k)\geq \rho$ for all $k$ large enough, and therefore there
exists $\tau>0$ such that
\begin{equation}\label{eq:forth}
 \Psi\left(
 \limsup_{k\to\infty} \frac{1}{k}\sum_{j=1}^k{\bf 1}_{A_j}
 \geq\frac{\rho}{2}
 \right)\geq\tau
\end{equation}
%{\LARGE \bf rewrite formula above}\\
(This follows from, e.g, Lemma 4.2 of \cite{noam} referring to the
events  $\frac{1}{k}\sum_{j=1}^k{\bf 1}_{A_j}\geq\frac{\rho}{2}$.)

Let $Z$ be the event in (\ref{eq:forth}), and let
\[
 W=\left\{
 \limsup_{k\to\infty} \frac{1}{k}\sum_{j=1}^k{\bf 1}_{B_j}
 \geq \tau\cdot\frac{1}{2}+(1-\tau)\cdot\gamma
 \right\}.
\]

Then by (\ref{eq:QPBk}) and independence, $P(W)=0$. On the other
hand, $\tQ(W|Z)>0$ and so  $Q(W)\geq \tQ(W|Z)\Psi(Z)>0$. Therefore
$Q$ and $P$ are not mutually absolutely continuous, and by
Proposition \ref{prop:01law} they are singular.
\end{proof}

\subsection{Non-Markovian paths}
Here we supply the proofs of parts \ref{item:genpath3} and
\ref{item:genpath2} of Theorem \ref{thm:zdcase}.

\begin{proof}[Proof of part \ref{item:genpath3} of Theorem \ref{thm:zdcase}]
Based on Lemma \ref{lem:shnitzel} all we need to show is the
existence of a measure on paths satisfying the exponential
intersection tail property in $\Z^3$. Such measures were constructed in
\cite{BPP} and \cite{HM}.
\end{proof}

\begin{proof}[Proof of part \ref{item:genpath2} of Theorem \ref{thm:zdcase}]
Let $f$ be a function from $\Omega$ to $\R$ such that $\E_\mu(f)=0$
and $\E_\nu(f)=1$. Let $\{J_k\}_{k=1}^\infty$ be a sequence of
weights, and let $L_k=\{x \in \Z^2 :\|x\|_1=k\}$. Define
\[
U_n=  \sum_{k=1}^n J_k \sum_{v \in L_k} f(\omega(v)) \,.
\]
Then $\E_P(U_n)=0$. Moreover, the measure $\tQ$ defined in (\ref{eq:tp}) satisfies
\[\E_\tQ(U_n|\XX)=\sum_{k=1}^n J_k \Bigl|[\XX]\cap L_k \Bigr| \geq \sum_{k=1}^n J_k  \] \,
and (since $|L_k|=4k$),
$$
 \var_\tQ(U_n|\XX)= \sum_{k=1}^n
 J_k^2\Bigl[\, \, \Bigl|[\XX]\cap L_k\Bigr|\var_\nu(f)+ \Big|L_k \setminus[\XX]\Big|\var_\mu(f)\Bigr]
  \leq 4C\sum_{k=1}^n k J_k^2
$$
where $C=\max(\var_\mu(f),\var_\nu(f))$. We also have
\[
\var_P(U_n)\le 4C\left(\sum_{k=1}^n k J_k^2 \right) \,.
\]

Pick $J_k=k^{-1}$, so that
$$ %\begin{equation}\label{eq:jn}
\frac
 {\left[\sum_{k=1}^n J_k\right]^2}
 {\sum_{k=1}^n k J_k^2}
 \longrightarrow\infty
$$ %\end{equation}

Then by Chebyshev's inequality,
\[
P\left(U_n>\frac 12\sum_{k=1}^nJ_k\right)\to 0
\]
and
\[
Q\left(U_n>\frac 12\sum_{k=1}^nJ_k\right)\to 1 \, ,
\]
so the proof is complete.
\end{proof}

%{\LARGE Worked up to here}

 \vspace{1cm}

\section{General graphs}\label{sec:gengrp}
In this section we prove Proposition \ref{prop:01law} and Theorem
\ref{thm:gengrp}. We start with Proposition \ref{prop:01law}. We
note that for the purpose of the proof given here, the assumption of
transience in the statement of the proposition can be relaxed to
assuming infinite orbits. However the question is only of interest
in the transient case.

Let $u$ be a neighbor of $v$, and  define $Q^*$ the way $Q$ is
defined, but with the path starting at $u$ instead of $v$. We
define the functions $f^*$ and $g^*$ similarly to $f$ and $g$
(recall (\ref{eq:f}) and (\ref{eq:g})), using $Q^*$ instead of
$Q$. The next lemma follows from the fact that $\mu$ and $\nu$ are
absolutely continuous with respect to each other.
\begin{lemma}\label{lem:qstar}
The measures $Q$ and $Q^*$ are absolutely continuous with respect
to each other. In particular, $f(\omega)=0$ if and only if
$f^*(\omega)=0$ for $Q$-almost every $\omega$, and $g(\omega)=0$
if and only if $g^*(\omega)=0$ for $P$-almost every $\omega$.
\end{lemma}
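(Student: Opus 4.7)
The plan is to prove $Q\ll Q^*$; the reverse will follow by swapping the roles of $u$ and $v$. The essential ingredient is that, since $\Omega$ is finite and both $\mu,\nu$ have full support, the constant
\[
C:=\max_{\omega\in\Omega}\max\bigl(\nu(\omega)/\mu(\omega),\,\mu(\omega)/\nu(\omega)\bigr)
\]
is finite; consequently any two product measures on $\Omega^{V(G)}$ of the form $\mu^{V\setminus S}\times\nu^{S}$ that differ on only finitely many coordinates are mutually absolutely continuous with a pointwise-bounded Radon-Nikodym derivative.

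First I would apply a strong Markov decomposition. Let $\tau:=\inf\{k\ge 0:\XX_k=u\}$ be the hitting time of $u$ by the walk $\XX\sim\Psi$ started at $v$. Since $u$ is a neighbor of $v$, $\Psi(\tau=1)\ge M(v,u)>0$. On $\{\tau<\infty\}$, the strong Markov property says that $\widetilde\XX:=(\XX_{\tau+k})_{k\ge 0}$ has the law of the walk from $u$, conditionally on $\XX[0,\tau]$. Because $[\XX]=\XX[0,\tau)\cup[\widetilde\XX]$ and $[\widetilde\XX]\supseteq\{u\}$, the symmetric difference $[\XX]\bigtriangleup[\widetilde\XX]$ is contained in the finite prefix $\XX[0,\tau)$, so $Q_\XX$ and $Q^*_{\widetilde\XX}$ differ at only finitely many coordinates and are mutually absolutely continuous with pointwise Radon-Nikodym derivative at most $C^\tau$. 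Integrating against $\Psi$: if $Q^*(A)=0$, then $Q^*_{\XX^*}(A)=0$ for $\Psi^*$-a.e.\ $\XX^*$, so by the coupling $Q_\XX(A)=0$ for $\Psi$-a.e.\ $\XX$ on $\{\tau<\infty\}$, giving $Q(A\cap\{\tau<\infty\})=0$.

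The main obstacle is to cover the contribution from $\{\tau=\infty\}$, which has positive probability in the transient regime. I would handle this by iterating the hitting-time decomposition along a sequence of intermediate vertices, combined with $H$-invariance: for any $w\in V(G)$ and any automorphism $h\in H$ with $h(u)=w$, the based-at-$w$ measure is the push-forward $hQ^*$, so $Q^*$-null sets transfer to null sets of that measure via $h$. Chaining the strong-Markov argument along a short path $v=w_0,w_1,\ldots,w_k=u$ of consecutive neighbors (so each consecutive hit happens with positive probability), and combining with the irreducibility implicit in having $M(v,u)>0$, one reduces every contribution to a collection of $Q^{(w_i)}$-null sets, all of which are transfers of the given $Q^*$-null set $A$ under elements of $H$. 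This closes the argument.

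Finally, the ``in particular'' assertions follow at once from mutual absolute continuity via L\'evy's upward martingale theorem: the ratio $f_n/f_n^*=Q^*(\omega^{(n)})/Q(\omega^{(n)})$ is the Radon-Nikodym derivative of $Q^*$ with respect to $Q$ on the $\sigma$-algebra generated by the first $n$ coordinates, so it converges $Q$-a.s.\ to $dQ^*/dQ$, which is positive and finite $Q$-a.s.\ by $Q\sim Q^*$; hence $\{f=0\}=\{f^*=0\}$ $Q$-a.s. The statement for $g,g^*$ is identical with $P$ in place of $Q$.
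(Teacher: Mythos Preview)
The paper offers no detailed proof; it simply asserts that the lemma ``follows from the fact that $\mu$ and $\nu$ are absolutely continuous with respect to each other.'' The intended argument is a one-step (not hitting-time) decomposition: condition the $\Psi$-walk on $\{\XX_1=u\}$, an event of probability $M(v,u)>0$. The shifted path $(\XX_1,\XX_2,\dots)$ then has law $\Psi^*$, and its range differs from $[\XX]$ by at most the single vertex $v$; since $\mu\sim\nu$ with ratio bounded by your constant $C$, this gives $Q(A)\ge M(v,u)\,C^{-1}Q^*(A)$ for every event $A$, hence $Q^*\ll Q$. The reverse inclusion follows by the same one-step argument run from $u$ (conditioning on $\{\XX^*_1=v\}$), which is what the phrase ``$u$ a neighbor of $v$'' is meant to license. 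There is no need to introduce the hitting time $\tau$ at all.

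Your treatment of $\{\tau=\infty\}$ contains a genuine gap. You write that for $h\in H$ with $h(u)=w$ the ``$Q^*$-null sets transfer to null sets of [$Q^{(w)}$] via $h$,'' but $H$-invariance only gives $Q^{(w)}=h_*Q^*$, so $Q^*(A)=0$ yields $Q^{(w)}(h(A))=0$, not $Q^{(w)}(A)=0$; the null set has been translated by $h$ and is no longer the set you need. Nor is there any ``irreducibility implicit in having $M(v,u)>0$''---that implication is simply false (consider the deterministic shift on $\Z$). Thus your chaining-plus-automorphism sketch does not close the argument, and the hitting-time framework has created a difficulty that the one-step approach avoids entirely. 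Your derivation of the ``in particular'' clauses from mutual absolute continuity, via the martingale ratio $f_n/f_n^*$, is correct.
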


Using Lemma \ref{lem:mart} we are now ready to prove Proposition
\ref{prop:01law}.

\begin{proof}[Proof of Proposition \ref{prop:01law}]
We consider the following coupling of $P$ and $Q$: our sample
space is
\[
\Xi=\left(\Omega^{V(G)}\right)^2\times\PP(G,v)
\]
%where $\PP(G,v)$ is the space of paths in $G$ starting at $v$.
The measure on this space is defined as follows: the first copy of
$\Omega^{V(G)}$ is equipped with the measure $\mu^{V(G)}$, the
second copy with the measure $\nu^{V(G)}$ and $\PP(G,v)$ is equipped
with the measure $\Psi$ on paths determined by $M$ and the starting
point $v$. These three are chosen to be independent of each other.
An element of $\Xi$ is denoted  $\eta=(\eta_1,\eta_2,\XX)$. We
now define $\omega_1(\eta)$ and $\omega_2(\eta)$ as follows: for a
vertex $u\in {V(G)}$,
\[
\omega_1(u)=\eta_1(g),
\]
and
\[
\omega_2(u)=\left\{
\begin{array}{ll}
\eta_1(u) & \mbox{ if } u\notin[\XX]\\
\eta_2(u) & \mbox{ if } u\in[\XX]
\end{array}
\right..
\]

We define $\tilde{f}(\eta):=f(\omega_2(\eta))$ and
$\tilde{g}(\eta):=g(\omega_1(\eta))$. In light of Parts
\ref{item:condabs} and \ref{item:condsing} of  Lemma
\ref{lem:mart}, all we need in order to prove the proposition is
to find a measure preserving ergodic transformation
$T:\Xi\to\Xi$ such that the events
$A_1=\{\tilde{f}(\eta)>0\}$ and $A_2=\{\tilde{g}(\eta)>0\}$ are
$T$-invariant.

We proceed with the definition of the transformation $T$. For
every $u\in G$, let $\alpha_u$ be an $M$-preserving automorphism
of $G$ such that $\alpha_u(u)=v$. the map $\alpha_u$ exists by the
assumption that $M$ is invariant under a transitive subgroup of
the automorphism group of $G$.

The path $\XX$ is a function from $\N$ to ${V(G)}$ with $\XX(0)=v$.
Let $\alpha=\alpha_{\XX(1)}$. Then,
\begin{enumerate}
\item for $n\in\N$,
\[
\left(T(\XX)\right)(n):=\alpha(\XX(n+1)).
\]
\item For $u\in {V(G)}$ and $i\in\{1,2\}$,
\[
\left(T(\eta_i)\right)(u):=\eta_i\left(\alpha^{-1}(u)\right).
\]
\end{enumerate}

It is easy to see that $T$ is measure preserving. The fact that
$A_1$ and $A_2$ are $T$-invariant follows from Lemma
\ref{lem:qstar} and parts \ref{item:ordf} and \ref{item:ordg} of
Lemma \ref{lem:mart}. We now show that $T$ is mixing, and
therefore ergodic. Let $A$ and $B$ be cylinder sets that depend
only on the first $r$ steps of $\XX$ and on $\eta_1$ and $\eta_2$
in the ball $B(v,r)$. Then,
\[
\Phi(T^{-n}A\cap B)-\Phi(A)\Phi(B)\leq\prob(\XX(n)\in B(v,2r))
\mathop{\longrightarrow}^{n\to\infty}0.
\]
For general sets, we get this by approximating them with cylinder
sets.
\end{proof}

Now we turn to proving Theorem \ref{thm:gengrp}. We first need a
lemma which is reminiscent of Lemma \ref{lem:shnitzel}. This lemma
is in the same spirit as Lemma 7.1 in \cite{levin}.
\begin{lemma}\label{lem:quenchedshnitzel}
Let $\XX^{(1)}$ and $\XX^{(2)}$ be two independent samples of the
random walk path. If there exists $C$ such that
\begin{equation}\label{eq:qsh}
\prob\left(\E\left[\left.e^{C\left|\left[\XX^{(1)}\right]\cap
\left[\XX^{(2)}\right]\right|}\right|\XX^{(1)}\right]<\infty\right)>0
\end{equation}

then there exist $\mu\neq\nu$ such that $P$ and $Q$ are
indistinguishable.
\end{lemma}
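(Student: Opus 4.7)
The strategy is to adapt the proof of Lemma~\ref{lem:shnitzel} by first restricting $\Psi$ to a set of paths on which the conditional exponential moment in (\ref{eq:qsh}) is \emph{uniformly} bounded, and then transferring the resulting absolute continuity back to the original measure $Q$ via Proposition~\ref{prop:01law}. Define the measurable nonnegative function $\varphi(\XX^{(1)}) := \E\bigl[e^{C|[\XX^{(1)}]\cap[\XX^{(2)}]|}\,\big|\,\XX^{(1)}\bigr]$. By hypothesis $\Psi(\{\varphi<\infty\})>0$, and since $\{\varphi<\infty\}=\bigcup_M E_M$ with $E_M:=\{\varphi\le M\}$, we may fix $M<\infty$ so that $\Psi(E_M)>0$. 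Let $\tilde\Psi$ denote $\Psi$ conditioned on $E_M$, and put $\tilde Q(A):=\int Q_\XX(A)\,d\tilde\Psi(\XX)$.

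\textbf{The $L^2$ estimate.} Choose $\mu\neq\nu$ with $\zeta:=\int(d\nu/d\mu)^2\,d\mu\le e^C$, exactly as in (\ref{eq:defgamma}). Let $\tilde g_n(\omega):=\tilde Q(\omega^{(n)})/P(\omega^{(n)})$. Running the computation that produced (\ref{eq:moshe}) verbatim with $\tilde\Psi$ in place of $\Psi$ yields
\[
\E_P[\tilde g_n^2]=\E_{\tilde\Psi\times\tilde\Psi}\!\bigl[\zeta^{|[\XX^{(1)}]\cap[\XX^{(2)}]\cap\{v_1,\ldots,v_n\}|}\bigr]\le\E_{\tilde\Psi\times\tilde\Psi}\!\bigl[e^{C|[\XX^{(1)}]\cap[\XX^{(2)}]|}\bigr].
\]
For $\XX^{(1)}\in E_M$, the inner integral over $\XX^{(2)}\sim\tilde\Psi$ equals $\Psi(E_M)^{-1}\int_{E_M}e^{C|[\XX^{(1)}]\cap[\XX^{(2)}]|}d\Psi(\XX^{(2)})\le \varphi(\XX^{(1)})/\Psi(E_M)\le M/\Psi(E_M)$, and averaging over $\XX^{(1)}\sim\tilde\Psi$ preserves this bound. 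Hence $\{\tilde g_n\}$ is bounded in $L^2(P)$, and in particular uniformly integrable. Standard martingale arguments (as in Lemma~\ref{lem:mart}) then give $\tilde Q\ll P$.

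\textbf{Transfer to $Q$.} Since $Q\ge\Psi(E_M)\,\tilde Q$ as measures, suppose for contradiction that $P$ and $Q$ are mutually singular, witnessed by a set $A$ with $P(A)=1$ and $Q(A)=0$. Then $\tilde Q(A)\le Q(A)/\Psi(E_M)=0$, so $\tilde Q$ is concentrated on $A^c$; but $\tilde Q\ll P$ together with $P(A^c)=0$ forces $\tilde Q(A^c)=0$, whence $\tilde Q\equiv 0$, a contradiction. Therefore $P$ and $Q$ are not singular, and Proposition~\ref{prop:01law} upgrades this to mutual absolute continuity, which is exactly indistinguishability. The main subtlety of the argument lies precisely in this last step: $\tilde\Psi$ is not in general an automorphism-invariant Markov chain, so Proposition~\ref{prop:01law} does not apply to $\tilde Q$ directly, and one must invoke it on the original $Q$, using $\tilde Q\ll P$ only as a tool to rule out singularity.
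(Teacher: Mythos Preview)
Your proof is correct and follows essentially the same route as the paper's: both restrict attention to the good paths $E_M=\{\varphi\le M\}$, run the $\zeta^{|[\XX^{(1)}]\cap[\XX^{(2)}]|}$ computation from Lemma~\ref{lem:shnitzel} on that set, and then invoke Proposition~\ref{prop:01law} on the original $Q$. The only difference is packaging: you condition $\Psi$ on $E_M$ and show the resulting perturbed-scenery measure is absolutely continuous with respect to $P$, whereas the paper keeps the original $g_n$ and bounds $\E_{\tQ}(g_n{\bf 1}_A)$ for $A=E_M$ in the joint path--scenery space.
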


\begin{proof}

Using Proposition \ref{prop:01law}, all we need to show is that
(with the notations of (\ref{eq:f}) and (\ref{eq:g}))
\begin{equation*}
Q\left(\lim_{n\to\infty}f_n>0\right)>0.
\end{equation*}
This is equivalent to saying
\begin{equation*}
Q\left(\lim_{n\to\infty}g_n<\infty\right)>0.
\end{equation*}

%We define the measures $\tP$ and $\tQ$ on
%$\PP(G,v)\times\Omega$ as follows:
%\begin{equation}\label{eq:tp}
%\tP=\Psi\times P \ \ ; \ \ \tQ=\Psi\otimes Q_\XX.
%\end{equation}

Let $\tQ$
%and $\tP$
be as in (\ref{eq:tp}). What we need to show is the same as
\begin{equation*}
\tQ\left(\lim_{n\to\infty}g_n<\infty\right)>0.
\end{equation*}

It is sufficient to show that there exists an event $A$ of
positive probability which is determined by $\XX$ satisfying

\begin{equation*}
\lim_{n\to\infty}\E_{\tQ}(g_n\cdot{\bf 1}_A)<\infty
\end{equation*}

\renewcommand{\tp}{{(P\times\Psi)}}
\renewcommand{\tP}{{(P\times\Psi)}}

which, using the fact that $g_n$ is the derivative $\frac{dQ}{dP}$
conditioned on $\omega^{(n)}$ and that $g_n$ and $\XX$ are
independent conditioned on $\omega$, translates to

%\comm{look this over again}

\begin{equation}\label{eq:posfin}
\lim_{n\to\infty}\E_{\tP}(g_n^2\cdot{\bf 1}_A)<\infty.
\end{equation}

%Let $h_n=\E(g_n|\XX^{(1)})$. (\ref{eq:posfin}) will follow if we
%prove that
%\begin{equation}\label{eq:posfincond}
%\end{equation}

We now repeat the calculations from the proof of Lemma
\ref{lem:shnitzel}:

\begin{eqnarray}
\nonumber
% h_n\leq
 \E_\tP(g_n^2 | \XX^{(1)})=
 \int_{\PP(G,v)}
 \E_P \left[\left.
 \prod_{i=1}^n
\frac
{Q_{\XX^{(1)}}\left(\omega(v_i)\right)}
{P\left(\omega(v_i)\right)}
\cdot\frac
{Q_{\XX^{(2)}}\left(\omega(v_i)\right)}
{P\left(\omega(v_i)\right)}
 \ \right|\XX^{(1)},\XX^{(2)}
 \right]
 d\Psi(\XX^{(2)})
\label{eq:moshe2}
\end{eqnarray}

We use the same decomposition as in the proof of Lemma \ref{lem:shnitzel}  to get that
\[
\E_\tP(g_n^2 | \XX^{(1)})= \E_{\Psi}\left(\left.
\zeta^{\left|[\XX^{(1)}]\cap[\XX^{(2)}]\cap\{v_1,\ldots,v_n\}\right|} \right|\XX^{(1)} \right)
\]

Let
\[
 U=U(\XX^{(1)})=\E_\Psi\left[\left.\zeta^{\left|\left[\XX^{(1)}\right]\cap\left[\XX^{(2)}\right]\right|}\right|\XX^{(1)}\right].
\]

Let $M$ be a large finite number such that $\Psi(U<M)>0$, and let
$A=(U<M)$.

For every choice of $M$, the sequence $\E(g_n^2\cdot{\bf
1}_{U<M})$ is a bounded sequence, and therefore

\begin{eqnarray*}
P\left(\lim_{n\to\infty}g_n^2\cdot{\bf
1}_{A}<\infty\right)=P(U<M)>0,
\end{eqnarray*}

and (\ref{eq:posfin}) holds.
\end{proof}

\begin{proof}[Proof of part \ref{item:pos_speed} of Theorem \ref{thm:gengrp}]
Part \ref{item:pos_speed} of Theorem \ref{thm:gengrp} will follow
from Lemma \ref{lem:quenchedshnitzel} once we prove the following
claim:
\begin{claim}\label{claim:itai}
Let $G$ be a Cayley graph such that the speed of the simple random
walk on $G$ is positive, and let $\XX^{(1)}$ and $\XX^{(2)}$ be two
independent samples of the path of the random walk on $G$ started
at the same point $v$. Then there exist $C$ such that
\[
\prob\left( \E\left[\left. e^{C\left|\left[\XX^{(1)}\right]\cap\left[\XX^{(2)}\right]\right|}  \ \right|\ \XX^{(1)}
\right] <\infty \right)=1.
\]
\end{claim}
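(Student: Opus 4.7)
My plan is to reduce Claim \ref{claim:itai} to a quenched tail estimate on the number of returns of $\XX^{(2)}$ to the random trajectory $S := [\XX^{(1)}]$, and then prove the required escape estimate by exploiting positive speed together with Cayley-graph invariance and reversibility.

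The first step is a standard renewal decomposition. Fix $\XX^{(1)}$ and define $\sigma_0 := 0$, $\sigma_{j+1} := \inf\{n > \sigma_j : \XX^{(2)}_n \in S\}$, and $K := \#\{j \ge 0 : \sigma_j < \infty\}$. Since $|[\XX^{(1)}] \cap [\XX^{(2)}]| \le K$, it suffices to produce a deterministic $C > 0$ with $\E[\,e^{CK} \mid \XX^{(1)}\,] < \infty$ $\Psi$-almost surely. By the strong Markov property of $\XX^{(2)}$ applied at each return $\sigma_j$, this reduces to producing a deterministic $\alpha_0 > 0$ such that
\[
\alpha(\XX^{(1)}) \;:=\; \inf_{y \in S}\, \prob^y\bigl(\XX_n \notin S \text{ for all } n \ge 1\bigr) \;\ge\; \alpha_0 \qquad \Psi\text{-a.s.},
\]
for then $K$ is stochastically dominated by a geometric variable with parameter $\alpha_0$ and any $C < -\log(1 - \alpha_0)$ works.

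To produce $\alpha_0$, use Cayley invariance to rewrite $\prob^y(\XX \text{ hits } S) = \prob^v(\XX \text{ hits } y^{-1}S)$ and decompose $\XX^{(1)}$ at the first time $k$ with $\XX^{(1)}_k = y$. The Markov property identifies the shifted future $\{y^{-1}\XX^{(1)}_{k+n}\}_{n \ge 0}$ with an independent random walk from $v$; reversibility of simple random walk on a Cayley graph (symmetric generating set) identifies the shifted, time-reversed past $\{y^{-1}\XX^{(1)}_{k-n}\}_{0 \le n \le k}$ with an independent random walk from $v$ of length $k$. Enlarging the past to infinity can only enlarge $y^{-1}S$ and decrease the escape probability, so distributionally
\[
\alpha(\XX^{(1)}) \;\ge\; \prob\bigl(\XX_n \notin [\XX^{(+)}]\cup [\XX^{(-)}] \text{ for all } n \ge 1\bigr)\;=:\;\alpha_0,
\]
where $\XX, \XX^{(+)}, \XX^{(-)}$ are three independent simple random walks from $v$; a countable-union argument over $k$ then converts this into the required uniform lower bound for $\Psi$-a.e.\ $\XX^{(1)}$.

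The heart of the proof, and the main obstacle, is to show $\alpha_0 > 0$: three independent simple random walks from $v$ on a positive-speed Cayley graph have pairwise disjoint trajectories past time zero with positive probability. In the \emph{nonamenable} case this is immediate from spectral radius $<1$: Varopoulos--Carne gives exponential decay of the Green function, hence summable intersections (which also drives the $L^2$ bound in Part (\ref{item:non_am}) of Theorem \ref{thm:gengrp}). In the \emph{amenable} positive-speed case --- e.g.\ the lamplighter groups $\Z_2 \wr \Z^d$ with $d \ge 3$ --- one invokes the equivalence, due to Avez, Guivarc'h, and Kaimanovich--Vershik, between positive speed and non-triviality of the Poisson boundary: with positive probability the three walks converge to three distinct boundary points, and a strong Markov argument applied once they lie in pairwise-disjoint shadows of their limit directions yields permanent avoidance with positive probability. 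This amenable case is exactly where the positive-speed hypothesis is genuinely used.
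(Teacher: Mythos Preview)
Your renewal reduction is sound, but there is a genuine gap at the step claiming $\alpha(\XX^{(1)}) \ge \alpha_0$ for $\Psi$-almost every $\XX^{(1)}$. The translation-plus-reversibility argument yields, for each fixed $k$, a coupling under which $Y_k := \prob^{\XX^{(1)}_k}\bigl(\XX_n \notin S \text{ for all } n\ge 1\bigr)$ dominates the quenched escape probability $Z$ from the trace of a full two-sided walk; in particular $\E_\Psi[Y_k] \ge \E[Z]=\alpha_0$ for every $k$. But these couplings live on different probability spaces for different $k$ (they are different recenterings of the \emph{same} realization of $\XX^{(1)}$), so they say nothing about $\inf_k Y_k$. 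No ``countable-union argument over $k$'' can convert per-$k$ distributional information into an almost-sure uniform lower bound: even if every $Y_k > 0$ a.s., one may well have $\alpha(\XX^{(1)}) = \inf_k Y_k = 0$ on a set of positive $\Psi$-measure. Since your geometric domination of $K$ needs a \emph{deterministic} lower bound on the escape probability in order to produce a deterministic $C$, the argument does not close.

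The paper avoids this issue entirely by bounding the tail of the intersection count directly, with no renewal structure. It invokes Proposition~6.2 of \cite{itai}: under positive speed, almost surely $G\bigl(\XX^{(1)}(n)\bigr) < e^{-n\gamma}$ for all large $n$, where $G$ is the Green function from $v$. Since $\prob\bigl(x \in [\XX^{(2)}]\bigr) \le G(x)$, a union bound over the time index of $\XX^{(1)}$ gives
\[
\prob\Bigl(\bigl|[\XX^{(1)}]\cap[\XX^{(2)}]\bigr| > n \,\Bigm|\, \XX^{(1)}\Bigr)
\;\le\; \sum_{\ell > n} G\bigl(\XX^{(1)}(\ell)\bigr)
\;=\; O\bigl(e^{-n\gamma}\bigr),
\]
and the claim follows in two lines. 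The external input here (exponential Green-function decay \emph{along the path}) is exactly calibrated to the conclusion; your external input (non-trivial Poisson boundary) is further away, and your last paragraph is not yet a proof that $\alpha_0 > 0$: for general positive-speed Cayley graphs the Poisson boundary is a measure-theoretic object rather than a geometric one, so ``pairwise-disjoint shadows'' are not available, and convergence of three walks to distinct boundary points does not by itself preclude intersections of their traces.
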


\end{proof}

\begin{proof}[Proof of Claim \ref{claim:itai}]
By Proposition 6.2 of \cite{itai},
%(see also  \cite{pierre}),
when the speed is positive, almost surely there exists $\gamma>0$ such that
$
G(\XX^{(1)}(n))<e^{-n\gamma}
$
for all $n$ large enough, where $G$ is Green's function for the
random walk started at $v$. Since
$\prob(x\in\left[\XX^{(2)}\right])\leq G(x)$, we infer that almost surely,

\begin{eqnarray*}
%\E\left(\left. e^{\gamma|\XX^{(1)}\cap\XX^{(2)}|}  \ \right|\
%\XX^{(1)} \right)
%\leq\sum_{n=0}^\infty e^{\gamma n}
%\prob\left(\left.\XX^{(1)}(n)\in\{\XX^{(2)}(\cdot)\}\ \right|\XX^{(1)}\right)
%<\infty.
\prob\left(\left.\left|\left[\XX^{(1)}\right]\cap\left[\XX^{(2)}\right]\right|>n\ \right|\XX^{(1)}\right)
 \leq\sum_{\ell>n}\prob\left(\XX^{(1)}(\ell)\in\left[\XX^{(2)}\right]\right)
 \leq\sum_{\ell>n}G(\XX^{(1)}(\ell))
% =e^{-\Omega(n)}
\end{eqnarray*}
and the right-hand side is at most $O \left(e^{-n\gamma}\right)$.

\end{proof}

\begin{proof}[Proof of part \ref{item:non_am} of Theorem \ref{thm:gengrp}]
Here we use Lemma \ref{lem:shnitzel} and the following claim:
\begin{claim}
Let $G$ be a transitive nonamenable graph, and let $\XX^{(1)}$ and
$\XX^{(2)}$ be two independent samples of the path of the random walk
on $G$. Then there exist $K$ such that
\[
\E\left( e^{K\left|\left[\XX^{(1)}\right]\cap\left[\XX^{(2)}\right]\right|}\right) <\infty
\]
\end{claim}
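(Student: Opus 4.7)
Let $I := |[\XX^{(1)}] \cap [\XX^{(2)}]|$ and write $V_2 := [\XX^{(2)}]$. The plan is to show that $I$ has exponentially decaying tails, $\prob(I > n) \leq C\rho^n$ for some $\rho < 1$, from which $\E[e^{KI}] < \infty$ for any $K < -\log\rho$ follows by summing $e^{Kn}\prob(I=n)$.

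The main input I would invoke is Kesten's theorem: since $G$ is transitive and nonamenable, the spectral radius of simple random walk satisfies $\rho < 1$, so $p_n(v,v) \leq C_0 \rho^n$. Combined with reversibility of SRW on a transitive graph, this gives an exponential bound on $\prob(\XX^{(1)}(\ell) \in V_2)$. Conditioning on $\XX^{(1)}(\ell)$ and using the elementary bound $\prob(x \in V_2) \leq G(v,x) = \sum_m p_m(v,x)$ (since the expected number of visits of $\XX^{(2)}$ to $x$ is at least $\prob(x \in V_2)$), one computes
\[
\prob\bigl(\XX^{(1)}(\ell) \in V_2\bigr) \leq \sum_x p_\ell(v,x) \sum_m p_m(v,x) = \sum_m p_{\ell+m}(v,v) \leq C_1\rho^\ell,
\]
where the middle equality is reversibility ($p_m(v,x) = p_m(x,v)$) together with Chapman--Kolmogorov.

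A pigeonhole observation then closes the argument. If $I > n$, then $\XX^{(1)}$ visits at least $n+1$ distinct vertices of $V_2$, and the $(n+1)$-th such distinct visit must occur at some time $\ell \geq n$. Hence $\{I > n\} \subseteq \{\exists\, \ell \geq n : \XX^{(1)}(\ell) \in V_2\}$, and a union bound gives $\prob(I > n) \leq \sum_{\ell \geq n} \prob(\XX^{(1)}(\ell) \in V_2) = O(\rho^n)$.

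The only nontrivial ingredient is Kesten's spectral-radius theorem; reversibility, Chapman--Kolmogorov and the union bound are routine. It is worth contrasting this plan with the proof of Part \ref{item:pos_speed}: there one has only a quenched bound coming from positive speed (which forces $\XX^{(1)}(\ell)$ far from $v$ and makes $G(v,\XX^{(1)}(\ell))$ small \emph{conditional} on $\XX^{(1)}$), whereas here the annealed sum $\sum_x p_\ell(v,x) G(v,x)$ collapses directly via reversibility to $\sum_m p_{\ell+m}(v,v)$, producing an unconditional bound -- which is precisely what Lemma \ref{lem:shnitzel} requires to deliver the $L^2$ conclusion in Part \ref{item:non_am}.
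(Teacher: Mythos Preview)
Your argument is correct and follows the same overall scheme as the paper: both reduce to showing that $\prob\bigl(\XX^{(1)}(\ell)\in[\XX^{(2)}]\bigr)$ decays exponentially in $\ell$, and then apply the identical pigeonhole/union-bound step
\[
\{I>n\}\subseteq\bigcup_{\ell\ge n}\{\XX^{(1)}(\ell)\in[\XX^{(2)}]\}.
\]
The only difference lies in how the single-time bound is obtained. The paper splits according to whether $\XX^{(1)}(n)\in B(v,\epsilon n)$: outside the ball, $\XX^{(2)}$ must take at least $\epsilon n$ steps to arrive, so one bounds by $\max_z\sum_{t>\epsilon n}p_t(v,z)$; inside the ball one pays a volume factor $|B(v,\epsilon n)|$ against the uniform heat-kernel decay $\sup_x p_n(v,x)\le C\rho^n$, and nonamenability makes both pieces exponentially small once $\epsilon$ is chosen small enough. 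Your reversibility/Chapman--Kolmogorov identity
\[
\sum_x p_\ell(v,x)\,G(v,x)=\sum_{m\ge 0}p_{\ell+m}(v,v)
\]
is a tidier route to the same exponential bound: it avoids the geometric ball split and the tuning of $\epsilon$, at the cost of using that simple random walk on a transitive graph is reversible with constant stationary measure---a fact the paper's argument does not invoke. Both routes ultimately rest on Kesten's spectral-radius characterization of nonamenability.
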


\ignore{
\begin{proof}
Let $B_n=\{u:2^n<d(u,v)\leq 2^{n+1}\}$. First we see that there
exist $C<\infty$ and $\rho>0$ such that
\begin{equation}\label{eq:inanu}
\E\left(|\XX_1\cap\XX_2\cap B_n|\right) < C e^{-\rho 2^n}.
\end{equation}

The reason is that there exist $\gamma>0$ such that for every
$u\in B_n$,
\[
\prob(u\in \XX_1)<e^{-\gamma n}
\]
and
\[
\sum_{u\in B_n}\prob(u\in \XX_1)=O(n).
\]
Therefore
\[
\E\left(|\XX_1\cap\XX_2\cap B_n|\right) = \frac{O(n^2)}{e^{\gamma
n}} < Ce^{-\rho 2^n}.
\]
when $\rho<\gamma$ and $C$ is sufficiently large.

From (\ref{eq:inanu}) we get that
\begin{equation}\label{eq:outball}
\prob\left(\XX_1\cap\XX_2\cap \{u:d(u,v)>2^n\}\neq\emptyset\right)
< 2Ce^{-\rho 2^n}.
\end{equation}

In addition, there exist $D<\infty$ and $\lambda>0$ such that
\[
\prob(\XX_1\cap\{u:d(u,v)<2^n\}>D2^n)<e^{-\lambda 2^n}
\]

Now for every $k$, let $n$ be so that $D2^n<k\leq D2^{n+1}$, and
then
\begin{eqnarray*}
\prob\left(\left|\XX_1\cap\XX_2\right|>k\right) &\leq&
\prob\left(\left|\XX_1\cap\XX_2\right|>D2^n\right)\\
&\leq& \prob(\XX_1\cap\{u:d(u,v)<2^n\}>D2^n) \\ &+&
\prob\left(\XX_1\cap\XX_2\cap \{u:d(u,v)>2^n\}\neq\emptyset\right)
\\ &=&e^{-\Omega(2^n)}=e^{-\Omega(k)}
\end{eqnarray*}
as desired.
\end{proof}
}%end \ignore
\begin{proof}
For any $n$ and $\epsilon>0$,
\begin{equation}\label{eq:nonam}
 \prob\left[\XX^{(1)}(n)\in\left[\XX^{(2)}\right]\right]
 \leq\prob\left[\XX^{(1)}(n)\in B(v,\epsilon n)\right]
 +\max_{z}\sum_{t>\epsilon n}\prob\left[\XX^{(2)}(t)=z\right]
\end{equation}
By non-amenability, for small enough $\epsilon$ both summands in
the RHS of (\ref{eq:nonam}) decay exponentially with $n$, so
\begin{equation*}
 \prob\left[\XX^{(1)}(n)\in\left[\XX^{(2)}\right]\right]
 \leq Ce^{-n\gamma}
\end{equation*}
for some (non-random) $C$ and $\gamma$. From here,
\[
 \prob\left[\left|\left[\XX^{(1)}\right]\cap\left[\XX^{(2)}\right]\right|>n\right]
 \leq\sum_{\ell>n}\prob\left[\XX^{(1)}(\ell)\in\left[\XX^{(2)}\right]\right]
 \leq \frac{C}{1-e^{-\gamma}}e^{-n\gamma}.
\]
\end{proof}
\end{proof}

\section{Finding the threshold on trees}\label{sec:tree}

In this section we prove Theorems \ref{thm:branum} and \ref{thm:sinpath}.
Recall the definitions of relative entropy, branching number and
local dimension (Definitions \ref{def:relent}, \ref{def:bn} and
\ref{def:locdim} in the Introduction).

%\comm{here $\cut$ is the macro $\backslash$cut}

We define a {\bf cut} in a tree to be a subset $\cut\subseteq V(T)$
such that the connected component of the root in $V(T)-\cut$ is
finite. We only consider minimal cuts, i.e. cuts such that removal
of one point will connect the root to infinity. From the
Min-cut-max-flow theorem, one can deduce the following
characterization of the branching number, see \cite{russell}
for the proof.
\begin{lemma}\label{lem:cutbran}
Let $\C(T)$ be the set of cuts of $T$. Then $b(T)$ is the infimum of
all values $\beta$ such that
\[
\inf_{\cut\in\C(T)}\sum_{u\in \cut}\beta^{-|u|}=0.
\]
\end{lemma}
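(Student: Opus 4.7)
The plan is to show that for every $\beta>1$, the existence of a probability measure $\Psi_\beta$ on $\partial T$ with $\sup_{v\in V(T)}\beta^{|v|}\Psi_\beta(v)<\infty$ is equivalent to having $\inf_{\cut\in\C(T)}\sum_{u\in\cut}\beta^{-|u|}>0$. Since both quantities $\inf_\cut\sum_u\beta^{-|u|}$ and (existence of $\Psi_\beta$) are monotone in $\beta$ (a smaller $\beta$ makes both conditions easier), they are each characterized by a threshold, and the equivalence above identifies these thresholds, giving $\br(T)=\inf\{\beta:\inf_\cut\sum_u\beta^{-|u|}=0\}$.

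For the easy direction, suppose $\Psi_\beta$ exists with $\Psi_\beta(v)\leq M\beta^{-|v|}$. Because $T$ is leafless and each cut is minimal, every ray passes through each cut $\cut$ at exactly one vertex (if a ray passed through two vertices of $\cut$, the deeper one would be redundant and minimality would fail). Hence $\sum_{u\in\cut}\Psi_\beta(u)=\Psi_\beta(\partial T)=1$, and substituting the bound on $\Psi_\beta$ gives $\sum_{u\in\cut}\beta^{-|u|}\geq 1/M$ uniformly in $\cut$, so $\inf_\cut\sum_u\beta^{-|u|}>0$.

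For the converse, I would invoke the max-flow--min-cut theorem. Assign to the edge connecting $v$ to its parent the capacity $c(v)=\beta^{-|v|}$, and set $\delta:=\inf_\cut\sum_u\beta^{-|u|}>0$. For each $n$, form the finite network $T_n$ by restricting to levels $0,\ldots,n$ and collapsing all level-$n$ vertices into a single sink. Every min-cut in $T_n$ corresponds to a cut of $T$ of weight at least $\delta$, so finite max-flow--min-cut yields a flow $\theta_n$ from root to sink of value at least $\delta$ satisfying $\theta_n(v)\leq c(v)$ at each vertex. A diagonal extraction, legitimate since each $\theta_n(v)\in[0,c(v)]$ is bounded, produces a subsequential limit $\theta$ that is a flow on $T$ from the root to infinity with $\theta(\text{root})\geq\delta$ and $\theta(v)\leq\beta^{-|v|}$. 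By Kolmogorov's extension theorem, the normalized flow induces a Borel probability measure $\Psi_\beta$ on $\partial T$ with $\Psi_\beta(v)=\theta(v)/\theta(\text{root})\leq \beta^{-|v|}/\delta$, so $\sup_v\beta^{|v|}\Psi_\beta(v)\leq 1/\delta<\infty$, as required.

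The main obstacle is the passage from finite to infinite max-flow: one must obtain a genuine flow to infinity rather than flows merely to level-$n$ approximations. This is handled by the compactness/diagonal argument above, which works because each individual flow value lies in a compact interval, and flow conservation at every vertex survives pointwise limits. One could alternatively cite the infinite-network version of max-flow--min-cut to absorb this step.
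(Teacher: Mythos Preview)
Your proof is correct and follows exactly the approach the paper indicates: the paper does not actually give its own proof of this lemma, but simply states that it can be deduced from the max-flow--min-cut theorem and refers to Lyons~\cite{russell} for the details. Your argument fills in precisely that outline.
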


%For ease of notation we shall use the log branching number
%$\lb(T)=\log(b(T))$.

The proofs of Theorems \ref{thm:branum} and \ref{thm:sinpath} are
fairly similar, and therefore we start with two lemmas that are at
the core of the proofs of both theorems.

In what follows, for a probability measure $\Psi$ on $\partial T$, we take $\Psi(\{v_1,v_2,\ldots,v_n\})$
to be the measure
of the set of rays going through any of the $v_i$-s. Additionally,
the measure of a finite self-avoiding path starting at the root is the measure of the set of all of
its extensions to infinite self-avoiding paths. A set of vertices $V_0 \subset V(T)$ is called an
{\bf antichain} if for any pair of vertices $v,w \in V(T)$ such that $v$ is an ancestor of $w$, at most one of $v,w$ is in $V_0$.

\begin{lemma}\label{lem:sub}
Let $H=H(\nu|\mu)$ and let $\Psi$ be a measure on $\partial T$.
Assume that there exist disjoint antichains $V_n\subseteq V(T)$ such that
\[
\lim_{n\to\infty}\Psi(V_n)=1
\]
and
\begin{equation}\label{eq:limens}
\lim_{n\to\infty} \sum_{u\in V_n} e^{-|u|H} = 0.
%\inf_{v\in V_n}e^{nH}\lambda(v)=\infty
\end{equation}

Then $P$ and $Q$ are singular.
\end{lemma}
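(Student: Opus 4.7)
\bigskip

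\noindent\textbf{Proof plan.} The plan is to construct an event $B \subset \Omega^{V(T)}$ with $P(B) = 0$ and $Q(B) = 1$, which gives $P \perp Q$ at once. For each vertex $u \in V(T)$, let $v_0, v_1, \ldots, v_{|u|}$ be the path from the root $v_0$ to $u = v_{|u|}$, and set
\[
S_u(\omega) \,:=\, \sum_{i=0}^{|u|} \log\frac{\nu(\omega(v_i))}{\mu(\omega(v_i))},\qquad H := H(\nu|\mu).
\]
Two observations will drive the argument. First, under $P = \mu^{V(T)}$ the identity $\E_P[e^{S_u}] = 1$ holds, since $e^{S_u}$ is a product of independent factors of $\mu$-mean one. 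Second, under $Q_\XX$ whenever $\XX$ passes through $u$, the labels on the ancestral path of $u$ are i.i.d.\ with law $\nu$, so the random variables $\xi_i := \log(\nu/\mu)(\omega(v_i)) - H$ are centered i.i.d.\ with variance $\sigma^2 > 0$ (strict, since $\mu \neq \nu$ and both are strictly positive on $\Omega$).

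We first extract a subsequence $n_k \uparrow \infty$ with $\sum_{u \in V_{n_k}} e^{-|u|H} \le 2^{-k}$ (possible by (\ref{eq:limens})) and $\Psi(V_{n_k}) \ge 1-2^{-k}$ (possible since $\Psi(V_n) \to 1$), and set
\[
B_k := \{\omega : \exists\,u\in V_{n_k}\text{ with } S_u(\omega) \ge |u|H\},\qquad B := \limsup_{k\to\infty} B_k.
\]
Markov's inequality applied to $e^{S_u}$ yields $P(S_u \ge |u|H) \le e^{-|u|H}$, hence $P(B_k) \le 2^{-k}$; Borel--Cantelli then gives $P(B) = 0$.

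For $Q(B) = 1$, we condition on $\XX \sim \Psi$. By Borel--Cantelli on the $\Psi$-side (using $\sum_k (1-\Psi(V_{n_k})) < \infty$), $\Psi$-a.s.\ the ray $\XX$ hits $V_{n_k}$ for all but finitely many $k$, at a unique vertex $\tau_k \in V_{n_k} \cap [\XX]$. Disjointness of the $V_n$'s together with finiteness of each level of $T$ forces only finitely many $V_n$ to meet $\{u : |u| \le M\}$ for any $M$, so $|\tau_k| \to \infty$. Under $Q_\XX$ the event $\{S_{\tau_k} \ge H|\tau_k|\}$ becomes $\{\sum_{i=0}^{|\tau_k|} \xi_i \ge 0\}$: a sign event for a centered i.i.d.\ random walk evaluated at the deterministic (given $\XX$) times $|\tau_k| \to \infty$. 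The CLT gives $\prob\bigl\{\sum_{i=0}^{|\tau_k|}\xi_i \ge 0\bigr\} \to \tfrac{1}{2}$, while $\bigl\{\sum_{i=0}^{|\tau_k|} \xi_i \ge 0 \text{ i.o.\ in } k\bigr\}$ lies in the tail $\sigma$-algebra of the i.i.d.\ sequence $(\xi_i)$; Kolmogorov's $0$--$1$ law then upgrades probability $\ge \tfrac{1}{2}$ to $1$. Thus $Q_\XX(B) = 1$ for $\Psi$-a.e.\ $\XX$, so $Q(B) = 1$.

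The main obstacle is the tightness of the threshold: the hypothesis $\sum_{u\in V_n} e^{-|u|H}\to 0$ is exactly saturated by using the sharp threshold $|u|H$ in the Markov bound for $P$, but that same threshold sits at the median of $S_u$ under $Q_\XX$, so each individual $B_k$ only has $Q$-probability close to $\tfrac{1}{2}$. The cure is to work with $B = \limsup_k B_k$ rather than any single $B_k$, and to invoke the tail $0$--$1$ law along the deterministic (given $\XX$) subsequence $|\tau_k|$ in order to promote $\tfrac{1}{2}$ to $1$ before integrating out $\XX$.
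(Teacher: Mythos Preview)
Your proof is correct and follows essentially the same approach as the paper's: the same likelihood-ratio statistic with threshold $|u|H$, Markov's inequality plus Borel--Cantelli on the $P$-side, and the CLT together with a zero--one law along the ray on the $Q$-side. One minor slip: you need not (and cannot, since local finiteness of $T$ is not assumed) invoke ``finiteness of each level of $T$'' to get $|\tau_k|\to\infty$; it follows directly because the $\tau_k$ are distinct vertices (the $V_{n_k}$ being disjoint) lying on the single ray $\XX$, and hence have distinct depths.
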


\begin{lemma}\label{lem:super}
Let $H=H(\nu|\mu)$ and let $\Psi$ be a measure on $\partial T$.
If there exists $\gamma>0$ such that for $\Psi$ almost every $\XX$,
\begin{equation}\label{eq:limefes}
\lim_{ \tiny
\begin{array}{c} u\in[\XX] \\ |u|\to\infty\end{array}}
e^{(H+\gamma)|u|}\Psi(u)=0
\end{equation}
then $P$ and $Q$ are indistinguishable.
\end{lemma}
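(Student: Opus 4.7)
The plan is to recognise the Radon--Nikodym martingale of Lemma \ref{lem:mart} as a multiplicative cascade on the weighted tree $(T,\Psi)$, and to analyse it via the size-biased (spine) measure. With the level-order enumeration of $V(T)$ (the limit $g$ is independent of the ordering by Lemma \ref{lem:mart}(\ref{item:ordg})), the $P$-martingale takes the form
\[
g_n(\omega) \;=\; \sum_{|v|=n}\Psi(v)\,R_v(\omega),\qquad R_v(\omega) \;=\; \prod_{w \preceq v}\frac{d\nu}{d\mu}(\omega(w)),
\]
and the hypothesis $d_\Psi \ge H+\gamma > H$ is exactly the Kahane--Peyri\`ere-type non-degeneracy condition for such a cascade. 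The natural tool is the augmented measure $\tQ$ of \eqref{eq:tp}: its $\omega$-marginal equals $Q$, and under $\tQ$ the spine $\XX$ has law $\Psi$, the labels on $[\XX]$ are i.i.d.\ $\nu$, and the labels off $[\XX]$ are i.i.d.\ $\mu$.

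For $Q \ll P$ I would decompose $g_n$ according to the level $k$ of the most recent common ancestor of a generic ray with the spine. Because off-spine labels preserve $\E_\mu(d\nu/d\mu)=1$, a summation-by-parts produces the telescoping identity
\[
\E_{\tQ}\bigl[\,g_n \bigm| \XX,\, \omega|_{[\XX]}\bigr] \;=\; 1 \;+\; \sum_{k=1}^n R_{\XX_{k-1}}\!\Bigl(\tfrac{d\nu}{d\mu}(\omega(\XX_k))-1\Bigr)\Psi(\XX_k).
\]
Under $\tQ$ the spine increments $\log(d\nu/d\mu)(\omega(\XX_j))$ are i.i.d.\ with mean $H$, so the strong law gives $R_{\XX_{k-1}}=e^{Hk+o(k)}$ $\tQ$-a.s., while by hypothesis $\Psi(\XX_k)=o\bigl(e^{-(H+\gamma)k}\bigr)$. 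Each summand is therefore $o(e^{-\gamma k/2})$ and the series converges $\tQ$-a.s.\ to a finite random variable $Y$. Conditional Markov now gives $Q(g_n>M)\leq\E_{\tQ}[\min(Y/M,1)]\to 0$ as $M\to\infty$, uniformly in $n$, so $(g_n)$ is tight under $Q$, equivalently uniformly integrable under $P$. By Lemma \ref{lem:mart}(\ref{item:condabs}) the martingale then converges in $L^1(P)$ to $g:=dQ/dP$ with $\int g\,dP=1$, proving $Q\ll P$.

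For the reverse direction $P\ll Q$ one needs $g>0$ $P$-a.s. The cascade self-similarity $g = \sum_{v\in V_n}\Psi(v)R_v\,g^{(v)}$, with $g^{(v)}$ the analogous limit on the rooted subtree $(T^{(v)},\Psi^{(v)})$, gives $\{g=0\}\subseteq\bigcap_{v\in V_n}\{g^{(v)}=0\}$ with independent summands; the hypothesis passes verbatim to each subtree, so the first direction applied to subtrees gives $\E_P(g^{(v)})=1$ for every $v$. To push $P(g=0)\leq\prod_v P(g^{(v)}=0)$ to zero I would produce a uniform lower bound $P(g^{(v)}>0)\geq c>0$ by restricting $\Psi$ to a measurable ``good'' subfamily $D_{N,C}=\{\chi:\Psi(\chi_k)\leq Ce^{-(H+\gamma)k}\ \forall k\geq N\}$ on which the tail bound is uniform, and applying a Paley--Zygmund estimate on the restricted cascade; the family $\bigcup_{N,C}D_{N,C}$ has full $\Psi$-measure, and the lower bound transfers back to the ambient cascade, yielding $P(g=0)\leq (1-c)^{|V_n|}\to 0$.

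The hard point is this last step: the tail control in the hypothesis is pointwise-asymptotic along rays, so a second-moment bound on the full cascade is generically unavailable; running the Paley--Zygmund argument only on the uniform-decay subfamilies $D_{N,C}$ and carefully transferring the resulting positive lower bound on $P(g^{(v)}>0)$ back to the full measure $\Psi$ -- thereby ruling out $g=0$ on a set of positive $P$-measure -- is the delicate technical core of the proof.
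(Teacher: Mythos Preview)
Your spine/telescoping argument for $Q\ll P$ is correct and is in fact a tidier repackaging of the paper's own proof. The paper also works under $\tQ$ conditioned on $\XX$: it writes $g_n=\int_{\partial T}\prod_{u\in[\XX']\cap\{v_1,\dots,v_n\}}r(\omega(u))\,d\Psi(\XX')$, restricts to a high-$\tQ$-probability event $A_\XX$ on which (i) $\Psi(\XX_k)<e^{-(H+\gamma)k}$ for all $k>N$ and (ii) the empirical $\nu$-frequencies of $\omega$ along the spine are $\delta$-close to $\nu$, and then bounds $\E_{\qx}[g_n\cdot\mathbf 1_{A_\XX}]$ by $\sum_j e^{j(H+\gamma/2)}\,\Psi(|[\XX]\cap[\XX']|=j)<\infty$. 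Your summation-by-parts uses the same two inputs (the decay hypothesis on $\Psi$ and the law of large numbers for $\log r$ along the spine) but yields an exact formula for $\E_{\tQ}[g_n\mid\XX,\omega|_{[\XX]}]$ and thereby avoids the auxiliary good event. The only cosmetic slip is the constant term, which is $r(\omega(\text{root}))$ rather than $1$.

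The direction $P\ll Q$, however, is where your plan goes wrong. The Paley--Zygmund step on the restricted cascade over $D_{N,C}$ requires a second moment bound, and $\E_P[g_n^2]$ on that cascade is controlled by $\zeta=\E_\mu[(d\nu/d\mu)^2]$; since $\log\zeta\ge H$ by Jensen, there is no reason for the available decay rate $H+\gamma$ to beat $\log\zeta$, so the second moment can blow up and the inequality is vacuous. Even granting $P(g^{(v)}>0)>0$ for each $v$ (which does follow from the first direction applied to subtrees), the product $\prod_{v\in V_n}P(g^{(v)}=0)$ need not tend to $0$ without the very uniformity you flag as delicate; the recursion is circular.

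The paper bypasses all of this with a two-line zero--one argument. From $Q\ll P$ one has $\E_P[g]=1$, hence $P(g>0)>0$. Changing finitely many labels multiplies $g$ by a positive bounded factor (since $\mu,\nu$ are mutually absolutely continuous on the finite set $\Omega$), so $\{g>0\}$ is a tail event for the i.i.d.\ product measure $P=\mu^{V(T)}$. Kolmogorov's $0$--$1$ law then forces $P(g>0)=1$, which by Lemma~\ref{lem:mart}(\ref{item:condabs}) is exactly $P\ll Q$. Replace your subtree recursion with this observation and the proof is complete.
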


\begin{proof}[Proof of Lemma \ref{lem:sub}]
Let
 \[
 \vo(V_n):=\sum_{u\in V_n} e^{-|u|H}.
 \]
 Let $n_k$ be a (deterministic) subsequence satisfying
\begin{equation}\label{eq:defnk}
\sum_{k=1}^\infty \vo(V_{n_k}) < \infty.
\end{equation}
For $\rho\in\Omega$, let $r(\rho)=\frac{\nu(\rho)}{\mu(\rho)}$. Let
$\tQ$ be the measure defined on $\Omega^{V(T)}\times\partial T$ as in (\ref{eq:tp}). Remember that $Q$ is the
$\Omega^{V(T)}$ marginal of $\tQ$.
Let $(\omega,\bar{\XX})$ be a sample of $\tQ$.
Then $\bar{\XX}$ is distributed according to $\Psi$. We denote the elements of $\bar{\XX}$  by $u_1,u_2,\ldots$. Then the sequence $\{\omega{(u_n)}\}_{n=1}^\infty$ is i.i.d. $\nu$ and independent of $\bar{\XX}$.

%Under the measure $\tQ$, almost
%surely there exists a $\Psi$-distributed ray
%$\bar{\XX}=(u_1,u_2,u_3,\ldots)$ along which the scenery is
%distributed according to $\nu$.
%, namely for every $\rho\in\Omega$,
%\begin{equation*}
%\lim_{n\to\infty}\frac{\#\{k\leq n:\omega(u_k)=\rho\}}{n}=\nu(\rho)
%\ \ \ \mbox{a.s.}
%\end{equation*}
For every $u\in V(T)$, let $\XX_u$ be the (finite) path from the
root to $u$, and we use $K_u$ to denote the event
\[
 K_u=\left\{\prod_{z\in
\XX_{u}} r(\omega(z)) \geq e^{nH} \right\}.
\]

By the central limit theorem,

%N2012 corrected  $K_{\XX_{u_n}}$ to $K_{u_n}$.

\begin{eqnarray}\label{eq:mar}
\nonumber \lim_{n\to\infty}\tQ(K_{{u_n}})
&=&\lim_{n\to\infty}\tQ\left( \prod_{z\in
\XX_{u_n}} r(\omega(z)) \geq e^{nH} \right)\\
&=&\lim_{n\to\infty}\tQ\left(  \sum_{z\in \XX_{u_n}} \log
r(\omega(z)) \geq {nH} \right) = 1/2.
\end{eqnarray}

In addition, by the condition that $\Psi(V_n)\to 1$, we get that
$\tQ([\bar{\XX}]\cap V_n\neq\emptyset)\to 1$. From here we get that
%\begin{equation}\label{eq:vnhetzi}
%\liminf_{n\to\infty}Q\left( \exists_{u\in V_n} \prod_{u\in
%\XX_u} r(\omega(u)) \geq e^{|v|H} \right)\geq 1/2.
%\end{equation}
almost surely the set
$
B:=\{k:[\bar{\XX}]\cap V_{n_k}\neq\emptyset\}
$
satisfies $|B|=\infty$. Let $z_1,z_2,\ldots$ be the points on $\bar{\XX}$
that are also in $\cup V_{n_k}$. From (\ref{eq:mar}), we get

%N2012 Q changed to \tQ. 

\begin{equation}\label{eq:vnhetzi}
\liminf_{k\to\infty}\tQ\left( K_{z_k} \right)\geq 1/2.
\end{equation}

\ignore{ For fixed $n_1$ and $n_2\to\infty$,

\[
\tQ\left( \left. \prod_{u\in
\bar{\XX}_{u_{n_2}}} r(\omega(u)) \geq e^{nH}\
 \right |
\omega(u_1),\ldots,\omega(u_{n_1})
\right)\to 1/2.
\]
} The event that there exist infinitely many values of $k$ such that
$K_{z_k}$ holds
%\begin{equation}\label{eq:manyn}
%\exists_{u\in V_{n_k}} \mbox{ s.t. } K_u \mbox{ holds}
%% \prod_{u\in \XX_w} r(\omega(u)) >
%%e^{|v|H}.
%\end{equation}
is a tail event on the values of $w$ along $\bar{\XX}$, and is independent
of $\bar{\XX}$, and therefore is a zero-one event. By(\ref{eq:vnhetzi}) it
has positive $\tQ$-probability and therefore has $\tQ$-probability
one.

So $\tQ$-almost surely (and also $Q$-almost surely),
there exist infinitely many values of $k$ such that
\begin{equation}\label{eq:manyn1}
\exists_{u\in V_{n_k}} \mbox{ s.t. }
K_u \mbox{ holds}
% \prod_{u\in \XX_w} r(\omega(u)) >
%e^{|v|H}.
\end{equation}

Let $u$ be a vertex of distance $n$ from the root.
\[
\E_P\left( \prod_{z\in \XX_u} r(\omega(z)) \right)=1,
\]

and therefore by Markov's inequality,
\[
P(K_u)=P\left( \prod_{z\in \XX_u} r(\omega(z))\geq e^{nH}\right)\leq
e^{-nH}=e^{-|u|H}
\]

and therefore
%\[
%\limsup_{n\to\infty}P\left( \exists_{v\in V_n} \prod_{w\in
%\gamma_v} r(\omega(w)) > e^{nH}
%\right)\leq\limsup_{n\to\infty}|V_n|e^{-nH}=0
%\]
\[
P\left( \exists_{u\in V_n} K_u \right)\leq
%\left|V_n\right|e^{-nH}
\vo(V_n).
\]

So by Borel-Cantelli, $P$-almost surely, only finitely many values
of $k$ satisfy (\ref{eq:manyn1}). Therefore $P$ and $Q$ are
singular.
\end{proof}

\begin{proof}[Proof of Lemma \ref{lem:super}]
Recall that in the proofs of Lemmas \ref{lem:shnitzel} and
\ref{lem:quenchedshnitzel}, we had
\[
f_n(\omega)=\frac{P\left(\omegan\right)}
{Q\left(\omegan\right)}
\]
and
\[
g_n(\omega)=\frac{Q\left(\omegan\right)}
{P\left(\omegan\right)}
=\frac{1}{f_n(\omega)}
\]

where $\omegan$ is as in (\ref{eq:omegn}).
%=\omega|_{\{u:|u|\leqn\}}$.

As before, it is sufficient to show that
\begin{equation}\label{eq:limfi1}
Q(\lim_{n\to\infty}g_n<\infty)=1
\end{equation}
and
\begin{equation}\label{eq:limfi2}
P(\lim_{n\to\infty}f_n<\infty)=1.
\end{equation}

From the fact that $1/g_n$ is a positive $Q$-martingale and
$1/f_n$ is a positive $P$-martingale we learn that the limits exist almost surely, but we must still
show that they are finite.

First we show (\ref{eq:limfi1}).

Fix $\epsilon>0$. Let $\delta$ be so that
\begin{equation}\label{eq:choicedelta}
\delta\sum_{\rho\in\Omega}\left|\log\left(\frac{\nu(\rho)}{\mu(\rho)}\right)\right|<
 \frac{\gamma}2.
\end{equation}
and let $N=N_\delta$ be so that for an i.i.d. $\nu$ sequence
$\{\ell_i\}$,
\[
\prob \left( \begin{array}{c}\mbox{for every }n>N,\mbox{ for every
}{\rho\in\Omega, } \\ \left|\frac{\#\{1\leq i\leq n:\ell_i=\rho
\}}{n}-\nu(\rho)\right|<\delta \end{array} \right)>1-\epsilon,
\]

and, using (\ref{eq:limefes}),

\begin{equation}\label{eq:axx}
\Psi\left\{ \XX : \mbox{for every }n>N, \ \
\Psi(\XX_n)<e^{-n(H+\gamma)} \right\}>1-\epsilon
\end{equation}

where $\XX_n$ is the $n$-th vertex of the path $\XX$.

%Let $\tq$ be the following measure on
%$\partial T \times\Omega^T$:
%\[
%\tq=\int\{\XX\}\times\mu^{T-\XX}\times\nu^{\XX}d\lambda(\XX).
%\]
%
%Note that the $\Omega^T$ marginal of $\tq$ is $Q$.
%Recall the definition of $\tQ$ as in (\ref{eq:deftq}).

%Let $A\subseteq\partial T\times\Omega^{V(T)}$ be the event
%\[
%A=\left\{ \f\angleorall_{n>N}\left( \Psi(\XX_n)<e^{-n(H+\gamma)} \mbox{
%and } \forall_{\rho\in\Omega} \left|\frac{\#\{1\leq i\leq
%n:\omega(\XX_i)=\rho \}}{n}-\nu(\rho)\right|<\delta \right) \right\}.
%\]

For $\XX\in\partial T$, we define $A_\XX\subseteq\Omega^{V(T)}$ as follows:

If there exists $n>N$ such that $\Psi(\XX_n)\geq e^{-n(H+\gamma)}$ then $A_\XX=\emptyset$. Otherwise,
we take

\[
A_\XX=\left\{
\omega\,:\,
 \forall_{n>N}  \forall_{\rho\in\Omega} \left|\frac{\#\{1\leq i\leq
n:\omega(\XX_i)=\rho \}}{n}-\nu(\rho)\right|<\delta \right\}.
\]

We define $A\subseteq\partial T\times\Omega^{V(T)}$ to be
\[
A=\bigcup_{\XX\in\partial T}A_\XX\times\{\XX\}.
\]

$\tq(A)>1-2\epsilon$ by the choice of $N$ ($A$ is clearly measurable) .

We will show that
\begin{equation}\label{eq:condtopr}
\lim_{n\to\infty}\E_{\tq}(g_n(\omega)\cdot{\bf 1}_A)<\infty.
\end{equation}

Observe that (\ref{eq:limfi1}) follows from (\ref{eq:condtopr}).
To verify  (\ref{eq:condtopr}), compute

\begin{eqnarray*}
\E_{\tq}(g_n\cdot{\bf 1}_A) =\int g_n\cdot{\bf 1}_A d\tq =
\int\E_{\qx}(g_n\cdot{\bf 1}_{A_\XX})d\Psi(\XX),
\end{eqnarray*}

%N2012 Explanation added. 
 where the second inequality follows from \eqref{eq:tp}, and
\ignore{
\begin{eqnarray}\nonumber
\E_{\qx}(g_n\cdot{\bf 1}_{A_\XX})
\E_{\tq}(g_n\cdot{\bf 1}_A|\XX)
 &=&\E_{\qx}\left(\frac{Q\left(\omegan\right)}
{P\left(\omegan\right)}
 \cdot{\bf 1}_A\ \right)\\ \nonumber
 &=&\E_{\qx}\left(\int
 \frac
 {{Q_{\XX^\prime}}\left(\omegan\right) }
 {P\left(\omegan\right)}
 d\Psi(\XX^\prime)
 \cdot{\bf 1}_A\ \right)\\ \label{eq:kmoshn}
 &=&\E_{\qx}\left(\int \prod_{u\in[\XX]\cap[\XX^\prime]}\frac{\nu(\omega(u))}{\mu(\omega(u))} d\Psi(\XX^\prime)
 \cdot{\bf 1}_A\ \right)\\
 \nonumber
 &=& \int \prod_{u\in[\XX]\cap[\XX^\prime]}r(\omega(u))\cdot {\bf 1}_A d\qx(\omega)d\Psi(\XX^\prime)
 \end{eqnarray}
}

\begin{eqnarray}\nonumber
\E_{\qx}(g_n\cdot{\bf 1}_{A_\XX})
%\E_{\tq}(g_n\cdot{\bf 1}_{A_\XX}|\XX)
 &=&\int\frac{Q\left(\omegan\right)}
{P\left(\omegan\right)}
 \cdot{\bf 1}_{A_\XX}d\qx(\omega)\\ \nonumber
 &=&\int\int
 \frac
 {{Q_{\XX^\prime}}\left(\omegan\right) }
 {P\left(\omegan\right)}
 d\Psi(\XX^\prime)
 \cdot{\bf 1}_{A_\XX}\ d\qx(\omega)\\ \nonumber
 &=&\int\int \prod_{u\in[\XX]\cap[\XX^\prime]\cap \{v_1,\ldots,v_n\}}\frac{\nu(\omega(u))}{\mu(\omega(u))} d\Psi(\XX^\prime)
 \cdot{\bf 1}_{A_\XX}\ d\qx(\omega)\\
 \label{eq:kmoshn}
 &=& \int \prod_{u\in[\XX]\cap[\XX^\prime]\cap \{v_1,\ldots,v_n\}}r(\omega(u))\cdot {\bf 1}_{A_\XX} d\qx(\omega)d\Psi(\XX^\prime)
 \end{eqnarray}

where the second equality follows from the decomposition $Q(W)
=\int{Q_{\XX^\prime}}(W)d\Psi(\XX^\prime)$ for every event $W\subseteq\Omega^{V(T)}$. The third equality then follows from the same reasoning as in the proof of Lemma \ref{lem:shnitzel}. Let $\hat{R}=\max\{r(\rho)\,:\,\rho\in\Omega\}$. By (\ref{eq:choicedelta}) and the choice of the event $A$, on the event $A$ we get that for every $\XX^\prime$, if $|[\XX]\cap[\XX^\prime]|
>N$
then
\begin{eqnarray*}
\prod_{v\in[\XX]\cap[\XX^\prime]\cap \{v_1,\ldots,v_n\}}r(\omega(v))
%\leq\prod_{v\in[\XX]\cap[\XX^\prime]}r(\omega(v))
&\leq&\max\{\hat{R}^N,e^{\bigl|[\XX]\cap[\XX^\prime]\cap \{v_1,\ldots,v_n\}\bigr|(H+\gamma/2)}\}\\
&<&\hat{R}^Ne^{|[\XX]\cap[\XX^\prime]|(H+\gamma/2)}.
\end{eqnarray*}

Therefore,

\begin{eqnarray}\label{eq:hefresh}
 %&\leq& \E_{\Psi\times\Psi}\left(\left.\prod_{v\in[\XX]\cap[\XX^\prime]}r(\omega(v))\ \right|\
 %\XX\right)
\E_{\qx}(g_n\cdot{\bf 1}_{A_\XX})
% \E_{\tq}(g_n\cdot{\bf 1}_A|\XX)
  &\leq& \\ \nonumber
Q_\XX(A_\XX)\left[
\sum_{j=0}^N
\hat{R}^j
\Psi(|[\XX]\cap[\XX^\prime]|=j)
\right.
&+&
\left.
\sum_{j=N}^\infty
% \E\left(
%\left.\prod_{v\in[\XX]\cap[\XX^\prime]}r(\omega(v))\ \right|
% A(\XX)\cap\left\{|[\XX]\cap[\XX^\prime]|=j\right\}\right)
%\hat{R}^N                                                                                
%N2012 \hat{R}^N removed.
e^{j(H+\gamma/2)}
\Psi(|[\XX]\cap[\XX^\prime]|=j)
\right].
\end{eqnarray}

For $j>N+1$,
\begin{equation}\label{eq:itrbnd}
Q_\XX(A_\XX)\Psi\left(\left|[\XX]\cap[\XX^\prime]\right|=j\right)\leq e^{-j(H+\gamma)}.
\end{equation}

Note that in (\ref{eq:hefresh}) and (\ref{eq:itrbnd}) $\XX$ is fixed and the $\Psi$-distributed variable is
$\XX^\prime$.\\
(\ref{eq:condtopr}) follows from \ref{eq:hefresh} and \ref{eq:itrbnd}, and thus we get (\ref{eq:limfi1}).
% The proof of (\ref{eq:limfi2}) is similar.
To see (\ref{eq:limfi2}), we first note that by (\ref{eq:limfi1}),
\[
P(\lim_{n\to\infty}f_n<\infty)>0.
\]
%N2012 Explanation added. 
Indeed, by \eqref{eq:limfi1}, $Q$ is absolutely continuous w.r.t. $P$. Therefore, the integral of $\frac{dQ}{dP}$ w.r.t. P is 1, and therefore cannot be $P$-a.s. zero. Therefore $\lim_{n\to\infty}f_n$ cannot be a.s. infinite.
The event $\{\lim_{n\to\infty}f_n<\infty\}$ is a tail event on the i.i.d. distribution $P$, so (\ref{eq:limfi2}) follows from the 0-1 law.
\end{proof}

Now we are able to prove Theorems \ref{thm:branum} and \ref{thm:sinpath}
\begin{proof}[Proof of Theorem \ref{thm:branum}]
Part \ref{item:super} follows immediately from  Definition
\ref{def:bn} and Lemma \ref{lem:super}. Part \ref{item:sub}
follows from Lemma \ref{lem:cutbran} and Lemma \ref{lem:sub}, by
taking the sets $V_n$ to be a sequence of cuts as in Lemma
\ref{lem:cutbran}.

%Let $C_n$ be a sequence of cuts such that
%\[
%\lim_{n\to\infty}\sum_{v\in C_n}e^{-H\|v\|}=0
%\]
%and such that
%\[
%\min\{\|v\|:v\in C_n\}\geq n.
%\]
%Then $Q$-a.s. there exist infinitely many values of $n$ such that
%\begin{equation}\label{eq:incut}
%\exists_{v\in C_n}\prod_{w\in\gamma_v}r(\omega(w))>e^{\|v\|H}
%\end{equation}
%
%However, by the same first moment argument as in \ref{lem:sub}, $P$-a.s. only finitely many values of $n$ satisfy (\ref{eq:incut}).
\end{proof}

\begin{proof}[Proof of Theorem \ref{thm:sinpath}]
Part \ref{item:sinsub}: For $\gamma>0$, Let $\Upsilon_+^{(\gamma)}=\{\XX\in\partial T: d_\Psi(\XX)>H+\gamma\}$, and similarly we define $\Psi_+^{(\gamma)}$ to be $\Psi$ conditioned on $\Upsilon_+^{(\gamma)}$. Provided that $\Psi\left(\Upsilon_+^{(\gamma)}\right)>0$, for $\Psi_+^{(\gamma)}$ almost every $\XX=(w_1,w_2,\ldots)$,
\[
\lim_{n\to\infty}\Psi_+^{(\gamma)}(w_n)e^{n(H+\gamma)}=0.
\]
Therefore, by Lemma \ref{lem:super}, $Q_{\Psi_+^{(\gamma)}}$ and $P$ are indistinguishable.
\ignore{
For every $A$,
\begin{equation}\label{eq:lima}
Q_+(A)=\lim_{H^\prime\to H}Q_{\Psi_+^{(H^\prime)}}(A)
\end{equation}
and
\begin{equation}\label{eq:ratio}
Q_+(A)\geq Q_{\Psi_+^{(H^\prime)}}(A)
\frac{\Psi(\Upsilon_+^{(H^\prime)}}
{\Psi(\Upsilon_+)}
\end{equation}
Therefore for every $A\subseteq\partial T$, if $P(A)>0$ then there exists some $H^\prime>H$ with $\Psi\left(\Upsilon_+^{(H^\prime)}\right)>0$ and $Q_{\Psi_+^{(H^\prime)}}(A)>0$, so by (\ref{eq:ratio}), $Q_+(A)>0$. If, on the other hand, $P(A)=0$ then $Q_{\Psi_+^{(H^\prime)}}(A)=0$ and by (\ref{eq:lima}), $Q_+(A)=0$.
 }
 As $\gamma\to 0$, the events $\Upsilon_+^{(\gamma)}$ increase and tend to $\Upsilon_+$. Thus by continuity, $P$ and $Q_+$ are indistinguishable.

\vspace{0.3cm}

\noindent Part \ref{item:sinsuper}: For $\gamma>0$,  Let
$\Upsilon_-^{(\gamma)}=\{\XX\in\partial T: d_\Psi(\XX)<H-\gamma\}$,
and similarly we define $\Psi_-^{(\gamma)}$ to be $\Psi$ conditioned
on $\Upsilon_-^{(\gamma)}$.
%N2012 Explanation added on the choice of \gamma. 
We choose $\gamma$ so that the probability of $\Upsilon_-^{(\gamma)}$ is positive.
 For every
$\XX=(\XX_0=v,\XX_1,\XX_2,\ldots)\in\Upsilon_-^{(\gamma)}$, we
define $n_0(\XX)=0$ and for every $k\geq 1$ we define $n_k(\XX)$ to
be
\[
n_k(\XX)=\min\left(n>n_{k-1}(\XX):
\frac{-\log(\Psi(\XX_n))}{n}<H-\gamma \right).
\]
We define $V_k=\{\XX_{n_k(\XX)}:\XX\in\Upsilon_-^{(\gamma)}\}$. It is easy to notice that $V_k$ is an antichain.

Clearly, $\Psi_-^{(\gamma)}(V_n)=1$. In addition, every vertex in $V_n$ is at distance at least $n$ from the root. We also know that
$\Psi(u)>e^{-|u|(H-\gamma)}$  for every $u\in V_n$. Therefore, for
\[
C=\Psi\bigl(\Upsilon_-^{(\gamma)}\bigr)^{-1}<\infty
\]
 we get that
 $C\Psi_-^{(\gamma)}(u)>e^{-|u|(H-\gamma)}$. Since
\[
\sum_{u\in V_n}\Psi_-^{(\gamma)}(u)=1,
\]
we see that
\[
\sum_{u\in V_n}e^{-|u|(H-\gamma)}<C,
\]
and remembering that $|u|\geq n$ for every $u\in V_n$, we get that
\[
\sum_{u\in V_n}e^{-|u|H}\leq e^{-n\gamma}\sum_{u\in
V_n}e^{-|u|(H-\gamma)}<Ce^{-n\gamma}
\mathop{\longrightarrow}_{n\to\infty}0,
\]

so $P$ and $Q_{\Psi_-^{(\gamma)}}$ are singular. Again,
%The same continuity argument as in Part \ref{item:sinsub}
continuity finishes the proof.

\end{proof}

\section*{Acknowledgments}
We thank Gidi~Amir, Itai~Benjamini, Nina~Gantert,
Ori~Gurel-Gurevich, Elchanan~Mossel and Gabor~Pete for useful discussions.
We are indebted to Ron Peled for a careful reading of the manuscript, and many helpful comments. Research of N.~B. was partially supported by ISF grant 708/08 and by ERC StG grant 239990.

\bibliography{detect}
\bibliographystyle{plain}

\end{document}